\documentclass[11pt]{article}
\usepackage{graphicx}
\usepackage{amsmath}
\usepackage{amsfonts}
\usepackage{epsfig}
\usepackage{setspace}
\usepackage{diagbox}
\usepackage{authblk}
\usepackage{caption}
\usepackage{rotating}
\usepackage{subcaption}
\usepackage{multirow}

\newcommand{\be}{\begin{equation}}
\newcommand{\ee}{\end{equation}}
\newcommand{\beqn}{\begin{eqnarray}}
\newcommand{\eeqn}{\end{eqnarray}}
\newcommand{\beqns}{\begin{eqnarray*}}
\newcommand{\eeqns}{\end{eqnarray*}}

\newcommand{\card}{\mbox{card}}
\newcommand{\Var}{\mbox{Var}}

\newcommand{\EE}{\ensuremath{{\mathbb E}}}

\newcommand{\fr}[1]{(\ref{#1})}

\newcommand{\Te}{\Theta}

\newtheorem{lemma}{Lemma}
\newtheorem{theorem}{Theorem}

\newtheorem{remark}{Remark}

\begin{document}

\title{\Large{\bf Minimax estimation for Varying Coefficient Model via Laguerre Series}}

\author[1]{Rida Benhaddou\thanks{benhaddo@ohio.edu}}
\author[2]{Khalid Chokri}
\author[1]{Jackson Pinschenat}
\affil[1]{Department of Mathematics, Ohio University, Athens, OH 45701}
\affil[2]{Modeling and Complex Systems Laboratory, Cadi Ayyad University, P.B. 549, Marrakech, Morocco}

\date{Draft 2}

\doublespacing
\maketitle
\begin{abstract}
We delve into the estimation of the functional coefficients and inference for varying coefficient model. Applying Laguerre series, we develop an estimator for the vector of functional coefficients that attains asymptotically optimal convergence rates in the minimax sense. These rates are derived for  functional coefficients that belong to Laguerre-Sobolev space. The method is based on approximating the functional coefficients using truncated Laguerre series and choosing empirical Laguerre coefficients that minimize the least squares criterion.  In addition, we establish the asymptotic normality of the estimator for the functional coefficients, construct their confidence intervals, and establish point-wise hypothesis tests about their true values. A simulations study is carried out to examine the finite-sample properties of the proposed methodology and to compare the performance to some of the existing methods. A real data set is considered as well, and results based on the proposed methodology are compared to those based on selected existing approaches.\\

{\bf Keywords and phrases: Varying coefficients regression, Laguerre series, Sobolev space, least-squares criterion, asymptotic normality, minimax convergence rate}\\ 

{\bf AMS (2000) Subject Classification: 62G05, 62G20, 62G08 }
 \end{abstract} 

\section{Introduction}

Consider the  regression model 
\be
y_i=y(t_i, {\bf x}_{i}) =\sum^{\bf r}_{l=1}\beta_l(t_i)x_{li}+\sigma \varepsilon_i, \ \ \ i=1, 2, \cdots, n, \label{conveq}
\ee
where  for some fixed integer ${\bf r}\geq 2$, ${\bf x}_i=\{x_{1i}, x_{2i}, \cdots, x_{{\bf r}i}\}$ are $n$ copies of ${\bf X}= \left(X_1, X_2, \cdots, X_{\bf r}\right)$, $x_{i1}$, $x_{i2}$, $\cdots$, $x_{i{\bf r}}$, are $i.i.d.$ random variables with known compactly supported joint probability density $\mu$, $t_i$ are $i.i.d.$ random variables with probability density function $h$. The covariate $t$ plays the role of an effect-modifying factor that interacts with the effects of elements of ${\bf X}$ on the response $Y$. The quantities $\beta_l(.)$ are unknown measurable functions on ${\bf L}^2\left(0, \infty\right)$, they describe the interaction between the coefficients $\beta_l$ and the covariate $t$, and $\sigma>0$ is a known variance constant. $\{ \varepsilon_i\}_{i\geq1}$ is a stationary Gaussian sequence that is independent of $(t_i, {\bf x}_i)$ for any  $i\in \{1, 2, \cdots, n\}$.  Denote the $n$-dimensional vector with components $\varepsilon_i$ by  $\boldsymbol{\varepsilon}_n$, and let $\Sigma_n$ be its covariance matrix. The goal is to estimate the functional coefficients $\beta_l(t)$ based on the data points $(t_1, {\bf x}_1, y_1)$, $(t_2, {\bf x}_2, y_2)$, $\cdots$, $(t_n, {\bf x}_n, y_n)$.  A model of this sort is referred to as varying coefficients regression, or varying coefficients model (VCM). \\
VCM is a natural extension of the classical linear regression. It can be useful when one wishes to analyze how regression coefficients associated with the predictor variables change with respect to another variable, called the moderator or the effect-modifying covariate. This variable may be time, genetic factors, environmental factors, etc. This model is a very useful tool for exploring the dynamic patterns in many scientific disciplines, such as biomedical sciences, economics, finance, health sciences, to name a few. VCM has become very popular in data analysis since its introduction in Hastie and Tibshirani~(1993), and it has been studied extensively via a variety of nonparametric methods such as kernel-based methods (local linear, local polynomial estimation), smoothing splines and wavelets. This includes Hoover et al.~(1998), Wu, Chiang and Hoover~(1998), Fan and Zhang~(1999, 2008), Cai, Fan and Li~(2000), Chiang, Rice and Wu~(2001), Fan, Yao and Cai~(2003), Huang, Wu and Zhou~(2004), Zhou and You~(2004), and Pensky and Klopp~(2013, 2015). \\
In the present work, we propose to use Laguerre series to estimate the functional coefficients in a VCM. Laguerre series has been used to some extent in nonparametric estimation in selected statistical models, such as density estimation, and deconvolution. In particular, the application of Laguerre series  in nonparametric estimation can be found in Vareschi~(2013), Comte et al.~(2014), Comte and Genon-Catalot~(2015), Benhaddou, Pensky and Rajapaksha~(2017), Laverny et al.~(2021), Dussap~(2021), and Benhaddou and Connell~(2022). \\
The motivation of using Laguerre series in VCM stems from the fact that in some scientific applications, the effect-modifying covariates are defined on the positive real line, such as time, and since Laguerre series is specifically designed to represent and approximate square integrable functions on $[0, \infty)$, it may be able to capture dynamic, time-dependent relationships better than other methods. This may be suited to modeling longitudinal data dynamics or epidemiology, for instance, in studying how  the impact of pollution on health changes over time. The advantage of using Laguerre series is that the tuning parameters for the estimation procedure are basically integer-valued and therefore the minimization process of the MISE requires fewer viable candidates to choose from. Thus, it may have the computational advantage over kernel-based methods which require bandwidth selection, where bandwidths are non-negative real quantities, or smoothing spline methods which require the selection of smoothing parameters which are positive real-valued. \\
In this paper, we aim at exploring the application of Laguerre series representation to varying-coefficient models. We first establish the mathematical framework for such application. Under square-integrability condition, each functional coefficient can be represented as a Laguerre series. However, for estimation purposes, we approximate the functional coefficients by truncated Laguerre series and minimize the least-squares criterion with respect to the Laguerre coefficients. This minimization process yields a vector solution of empirical Laguerre coefficients, which  allows one to construct estimators for both individual functional coefficients $\beta_l(t)$, $l=1, 2, \cdots, {\bf r}$, and the vector of functional coefficients, $\boldsymbol{\beta}(t)=\left(\beta_1(t), \beta_2(t), \cdots, \beta_{\bf r}(t)\right)^T$.  The asymptotic properties of the estimators of both individual and the vector of functional coefficients are established. The truncation levels $M_1$, $M_2$, $\cdots$, $M_{\bf r}$, are chosen so that the mean squared error (MSE) for estimating the vector of functional coefficients is minimized. As a result, it is shown that the estimator for the vector of functional coefficients is asymptotical optimal in the minimax sense. In addition, the asymptotic normality of individual empirical functional coefficients is established, confidence intervals of the true functional coefficients are derived, and point-wise hypothesis tests are constructed. Furthermore, to study the finite-sample properties of the proposed procedure, a simulations study is conducted and our results are compared with selected existing methodologies. Finally, a real data set is analyzed using the proposed Laguerre series methodology and the local linear kernel estimator and their performances are compared. The results are also compared to those under baseline linear regression framework. 
 \section{Estimation Algorithm}
 Consider the orthonormal basis that consists of the system of Laguerre functions
\be\label{lague-def}
\phi_k(t)=e^{-t/2}L_k(t), \ \ k=0, 1, 2, \cdots, 
\ee
where $L_k(t)$ are Laguerre polynomials (see, e.g., Gradshtein and Ryzhik~(1980), Section 8.97). Since the functions $\phi_k(t)$, $k=0, 1, 2, \cdots$, form an orthonormal basis on $(0,\infty)$, the functions $\beta_l(t) \in {\bf L}^2\left(0, \infty\right)$ can be expanded over such basis as follows
\be
\beta_l(t)=\sum^{\infty}_{k=0}\theta_{lk}\phi_k(t), 
\ee
where 
\be\label{phi-it}
\theta_{lk}=\int^{\infty}_0\beta_l(t)\phi_k(t)dt. 
\ee
Since $t$ is random and follows the density $h$, consider instead the orthonormal basis 
\be\label{phi-tild}
\tilde{\phi}_k(t)=\frac{\phi_k(t)}{\sqrt{h(t)}}. 
\ee
\\
Now, consider the approximation of $\beta_l(t)$
\be
\tilde{\beta}_l(t)=\sum^{M_l-1}_{k=0}\theta_{lk}\tilde{\phi}_k(t). 
\ee
Keep in mind that
\be
\beta_l(t)=\sum^{\infty}_{k=0}\theta_{lk}\tilde{\phi}_k(t)=\tilde{\beta}_l(t)+\rho_l(t),
\ee
where 
\be
\rho_l(t)=\sum^{\infty}_{k=M_l}\theta_{lk}\tilde{\phi}_k(t).
\ee
Therefore, equation \fr{conveq} can be written in the form of a system of equations 
\be \label{Y-syst}
{\bf Y}=\Phi\Theta+\boldsymbol{\rho} +\sigma\boldsymbol{\varepsilon}_n,
\ee
where $\Phi$ is an $n\times \sum^{\bf r}_{l=1}M_l$ matrix such that the $i^{th}$ row is "$\phi_{10}(t_i)x_{1i}, \phi_{11}(t_i)x_{1i}, \cdots, \phi_{1(M_1-1)}x_{1i}$, $\phi_{20}(t_i)x_{2i}, \phi_{21}(t_i)x_{2i}, \cdots, \phi_{2(M_2-1)}, \cdots, \phi_{{\bf r}0}(t_i)x_{{\bf r}i}, \phi_{{\bf r}1}(t_i)x_{{\bf r}i}, \cdots, \phi_{{\bf r}(M_r-1)}x_{{\bf r}i}$, and 
$\Theta$ is the $\sum^{\bf r}_{l=1}M_l$-dimensional vector of coefficients $\theta_{10}, \theta_{11}, \theta_{12}, \cdots, \theta_{1(M_1-1)}, \theta_{20}, \theta_{21}, \cdots, \theta_{2(M_2-1)}, \cdots, \theta{{\bf r}(M_{\bf r}-1)}$, and $\boldsymbol{\rho}$ is the $n$-dimensional vector with $i^{th}$ element $\sum^{\bf r}_{l=1}\rho_l(t_i)x_{li}$. Notice, that if we left-multiply both sides of  \fr{Y-syst} by $\left[\Phi^T\Phi\right]^{-1}\Phi^T$, we obtain 
\be \label{Y-syst-phi}
\left[\Phi^T\Phi\right]^{-1}\Phi^T{\bf Y}=\Theta+\left[\Phi^T\Phi\right]^{-1}\Phi^T\boldsymbol{\rho} +\sigma\left[\Phi^T\Phi\right]^{-1}\Phi^T\boldsymbol{\varepsilon}_n,
\ee
Therefore, minimizing the least square criterion 
\beqns
L(\Theta)=\left[{\bf Y}-\Phi\Theta\right]^T\left[{\bf Y}-\Phi\Theta\right],
\eeqns
yields the solution
\be \label{Theta-hat}
\widehat{\Theta}=\left[\Phi^T\Phi\right]^{-1}\Phi^T{\bf Y}.
\ee
Solution \fr{Theta-hat} can be used to estimate individual functional coefficients, $\beta_l(t)$ $l=1, 2, \cdots, {\bf r}$,  or the vector of functional coefficients, $\boldsymbol{\beta}(t)=\left(\beta_1(t), \beta_2(t), \cdots, \beta_{\bf r}(t)\right)^T$. \\
\noindent{\bf Assumption A.1}. There exist constants $c_1$ and $c_2$,($0<c_1\leq c_2<\infty$), independent of $n$ such that
\be\label{spect-sigma}
c_1n^{1-\alpha}\leq \lambda_{\min}(\Sigma_n)\leq \lambda_{\max}(\Sigma_n)\leq c_2n^{1-\alpha}, \ \ 0<\alpha\leq 1,
\ee
where $\alpha$ is the long-memory parameter, with $\alpha=1$ corresponding to having short memory, and $\lambda_{\min}(\Sigma_n)$ and $\lambda_{\max}(\Sigma_n)$ are the smallest and the largest eigenvalues of $\Sigma_n$.  \\
\noindent{\bf Assumption A.2}. The random variables $X_l$, $l=1, 2, \cdots, {\bf r}$, are such that $\EE[X^2_{l}]<\infty$. \\
\noindent{\bf Assumption A.3}. The random vector ${\bf X}= \left(X_1, X_2, \cdots, X_{\bf r}\right)$ is such that the covariance matrix ${\bf W}=\EE\left[{\bf X}^T{\bf X}\right]$ has bounded eigenvalues, that is 
\be\label{eigen-w}
0<\omega_{0}=\lambda_{\min}\left({\bf W}\right)\leq \lambda_{\max}\left({\bf W}\right)=\omega_1<\infty.
\ee
\noindent{\bf Assumption A.4}. The density function $h$ is bounded away from zero, that is, for all $t\in(0, \infty)$, there exists $m_o>0$ such that $h(t)\geq m_o$. \\
\noindent{\bf Assumption A.5}. The functions $\beta_l(t)$ belong to Laguerre-Sobolev space, in particular, they are characterized by Laguerre coefficients $\theta_{lk}$ with the property
\be\label{lagsob}
{\bf B}^{\gamma_l}(A_l)=\left\{\beta_l\in L^2(0, \infty):\sum^{\infty}_{k=0}(k\vee 1)^{2\gamma_l}\theta_{lk}^2\leq A_l\right\}. 
\ee
  \begin{remark}	
 Functional spaces of type  \fr{lagsob} have been introduced in Bongioanni and Torrea~(2009) to study Laguerre operators, and the connection with Laguerre coefficients was established in Comte and Genon-Catalot~(2015). 
    \end{remark}
\section{Estimator of a Single Functional Coefficient, $\beta_l(t)$, and Asymptotic Analysis}
To estimate $\beta_l(t)$, we use 
\be \label{bet-l-est}
\widehat{\beta}_l(t)=\left(\tilde{\boldsymbol{\phi}}_l(t)\right)^T\widehat{\Theta},
\ee
where $\tilde{\boldsymbol{\phi}}_l(t)$ is the $\sum^{\bf r}_{l=1}M_l$-dimensional column vector whose elements are all zeros except in the $l^{th}$ block, in which the elements are the Laguerre functions $\tilde{\phi}_{lk}(t)$, with $k=0, 1, \cdots, M_{l}-1$, that is, 
\be
\left(\tilde{\boldsymbol{\phi}}_l(t)\right)^T=\left( 0, 0, \cdots, 0, \tilde{\phi}_{l0}(t), \tilde{\phi}_{l1}(t), \cdots, \tilde{\phi}_{l(M_l-1)}(t), 0, 0, \cdots, 0 \right).
\ee
To decide the truncation levels $M_l$, we need to study the mean square integrated error for \fr{bet-l-est}.
 \begin{lemma} \label{lem-ephiphi} Let {\bf Assumptions} {\bf A.2}-{\bf A.4} hold. Then, as $n\rightarrow \infty$
    \beqn
tr\left(\EE\left[\left[\Phi^T\Phi\right]^{-1}\right]\right)&\asymp& \frac{\sum^{\bf r}_{l=1}M_l}{n}, \label{tr-vec-phi}\\
tr\left(\EE\left[\left[\Phi^T\Phi\right]^{-1}\right]{\bf I}_l\right)&\asymp& \frac{M_l}{n}, \label{tr-l-phi}
\eeqn
where ${\bf I}_l$ is the  $\sum^{\bf r}_{d=1}M_d$-dimensional diagonal matrix whose $l^{th}$ block diagonal matrix of size $(M_l)\times(M_l)$ is the identity matrix and the rest of the diagonal elements are all zero. 
    \end{lemma}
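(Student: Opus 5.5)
The plan is to compute the population version of $\frac1n\Phi^T\Phi$, show that its eigenvalues are bounded above and below, and then pass from $\Phi^T\Phi$ to its inverse.

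The $i$-th row of $\Phi$ is ${\bf v}_i^T$, where ${\bf v}_i = \left(x_{1i}\tilde{\boldsymbol\phi}^{(1)}(t_i),\dots,x_{{\bf r}i}\tilde{\boldsymbol\phi}^{({\bf r})}(t_i)\right)$ and $\tilde{\boldsymbol\phi}^{(l)}(t)=(\tilde\phi_0(t),\dots,\tilde\phi_{M_l-1}(t))^T$. Hence $\Phi^T\Phi=\sum_{i=1}^n {\bf v}_i{\bf v}_i^T$ is a sum of $n$ i.i.d. rank-one matrices. Because $t_i$ is independent of ${\bf x}_i$ and $\tilde\phi_k=\phi_k/\sqrt h$, the $(l,k),(l',k')$ entry of $\EE[{\bf v}_1{\bf v}_1^T]$ is
\[
\EE[X_lX_{l'}]\int_0^\infty \frac{\phi_k(t)\phi_{k'}(t)}{h(t)}h(t)\,dt=W_{ll'}\delta_{kk'} .
\]
So $\EE[\Phi^T\Phi]=n\,{\bf G}$, where ${\bf G}$ is the block matrix with blocks $W_{ll'}{\bf J}_{ll'}$ and ${\bf J}_{ll'}$ is the $M_l\times M_{l'}$ matrix with ones on the leading diagonal. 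Up to a permutation, ${\bf G}$ decomposes as a direct sum over the index $k$: for each $k$ one gets the principal submatrix of ${\bf W}$ indexed by $\{l: M_l>k\}$. By Cauchy interlacing and Assumption A.3, every eigenvalue of ${\bf G}$ lies in $[\omega_0,\omega_1]$. Assumption A.2 guarantees these entries are finite. Assumption A.4 is what makes $\tilde\phi_k$ well defined, and I will also use it to bound $\sup_t|\tilde\phi_k(t)|\le m_o^{-1/2}$, since the Laguerre functions satisfy $|\phi_k|\le1$.

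Once this is in place, the traces follow. If ${\bf A}$ is a positive definite matrix of size $N=\sum_d M_d$ with spectrum in $[a,b]$, then ${\bf A}^{-1}$ has spectrum in $[1/b,1/a]$. This gives $N/b\le tr({\bf A}^{-1})\le N/a$, and likewise for the $l$-th diagonal block, since $tr({\bf A}^{-1}{\bf I}_l)$ is a sum of $M_l$ diagonal entries, each lying in $[1/b,1/a]$. With ${\bf A}=n{\bf G}$ this yields exactly $N/n$ and $M_l/n$ up to the constants $\omega_0^{-1},\omega_1^{-1}$. The lower bound for the expectation of the random inverse is immediate: matrix convexity of ${\bf A}\mapsto{\bf A}^{-1}$ (Jensen) gives $\EE[(\Phi^T\Phi)^{-1}]\succeq (\EE\Phi^T\Phi)^{-1}=(n{\bf G})^{-1}$. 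Taking traces, and traces over the $l$-th block, gives the lower halves of both \fr{tr-vec-phi} and \fr{tr-l-phi}.

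The main obstacle is the upper bound, because it requires controlling $\EE[(\Phi^T\Phi)^{-1}]$ from above. I would first apply a matrix concentration inequality (matrix Bernstein or Chernoff) to $\frac1n\Phi^T\Phi-{\bf G}$. The rank-one summands satisfy $\|{\bf v}_i\|^2\le m_o^{-1}N\max_l x_{li}^2$, which is bounded because $\mu$ has compact support. Matrix Chernoff then gives
\[
P\!\left(\lambda_{\min}\!\left(\tfrac1n\Phi^T\Phi\right)<\tfrac{\omega_0}{2}\right)\le N e^{-cn/N},
\]
which is negligible provided $N\log N=o(n)$. I will state this growth condition on the truncation levels explicitly; it is satisfied by the choices of $M_l$ made later. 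On the complementary good event, the spectral argument above gives the upper bound with constant $2/\omega_0$. On the bad event the inverse must be handled separately, since it may be large. I would either regularize, interpreting the inverse on the event where $\Phi^T\Phi$ is well conditioned (the usual convention in least-squares analyses of this type), or bound a moment of $\lambda_{\min}^{-1}$ and multiply it by the exponentially small probability. Making this tail step rigorous is the delicate point of the proof; everything else reduces to the eigenvalue sandwich for ${\bf G}$.
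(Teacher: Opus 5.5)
The paper states Lemma~\ref{lem-ephiphi} without any proof (its Proofs section treats only Theorems~\ref{th:upperbds-indiv}--\ref{thnor}), so your argument has to stand on its own. The lower halves of \fr{tr-vec-phi} and \fr{tr-l-phi} are correct. The steps $\EE[\Phi^T\Phi]=n{\bf G}$, the splitting of ${\bf G}$ into principal submatrices of ${\bf W}$ indexed by $\{l:M_l>k\}$, the spectrum of ${\bf G}$ lying in $[\omega_0,\omega_1]$, and operator Jensen $\EE[(\Phi^T\Phi)^{-1}]\succeq(n{\bf G})^{-1}$ all hold. This needs a.s.\ invertibility, which at least requires $n\geq\sum_l M_l$; you are right that the lemma tacitly needs a growth restriction on the $M_l$. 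Your good-event half of the upper bound is also correct.

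The gap is the bad event. You flag it yourself, but it is not a technicality that a sharper tail estimate will remove. The upper halves of \fr{tr-vec-phi}--\fr{tr-l-phi} are the direction Theorems~\ref{th:upperbds-indiv} and~\ref{th:upperbds-vectr} actually use. They do not follow from what you extract from {\bf A.2}--{\bf A.4}: finite second moments, the spectral bounds on ${\bf W}$, compact support of $\mu$, and $K:=\sup_t\tilde\phi_0^2(t)<\infty$. Matrix Chernoff bounds the probability of the bad event, but nothing bounds the size of $(\Phi^T\Phi)^{-1}$ on it.

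Here is a counterexample. Take $X_1,\dots,X_{\bf r}$ i.i.d.\ with the normalized density $c\,|x|^{-1}\log^{-2}|x|$ on $0<|x|\leq 1/2$. It is compactly supported and ${\bf W}=\EE[X_1^2]\,{\bf I}$, yet $P(|X_1|\leq s)=2c/\log(1/s)$. Since $\lambda_{\min}(\Phi^T\Phi)\leq(\Phi^T\Phi)_{(1,0),(1,0)}\leq K\sum_i x_{1i}^2$, you get $P(\lambda_{\min}(\Phi^T\Phi)\leq\epsilon)\geq\left(4c/\log(nK/\epsilon)\right)^n$. Hence $\EE[1/\lambda_{\min}(\Phi^T\Phi)]=\infty$. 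Because $((\Phi^T\Phi)^{-1})_{jj}\geq 1/(\Phi^T\Phi)_{jj}$, both the full trace and every block trace in the lemma are infinite, for every $n$ and every choice of the $M_l$.

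So neither of your two options closes the gap as stated. Your second option (a moment of $\lambda_{\min}^{-1}$ times $Ne^{-cn/N}$) needs an extra anti-concentration hypothesis on the design. An example is $\sup_{\|\theta\|=1}P(|\langle{\bf v}_1,\theta\rangle|\leq s)\leq(Cs)^{a}$ uniformly in the $M_l$, which gives bounded negative moments of $\lambda_{\min}(\Phi^T\Phi/n)$ once $n\geq C'N$. Your first option inverts only on $\{\lambda_{\min}(\Phi^T\Phi/n)\geq\omega_0/2\}$ and sets $\widehat\Theta=0$ elsewhere, as in random-design least-squares analyses. That proves a modified lemma about a modified estimator. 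Either way the statement must change.

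Finally, {\bf A.4} cannot hold for any probability density on $(0,\infty)$, since $h\geq m_o>0$ would force $\int_0^\infty h=\infty$. So the uniform bound $\|{\bf v}_i\|^2\leq m_o^{-1}N\max_l x_{li}^2$ that drives your Chernoff exponent also has to be imposed as a separate explicit assumption.
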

 \begin{theorem}\label{th:upperbds-indiv} Let  $\widehat{\beta}_l(t)$ be defined in \fr{bet-l-est}, and let Assumptions {\bf A.1}, {\bf{A.2}}, {\bf{A.4}} and {\bf{A.5}} hold. Then, as $n\rightarrow \infty$,
\be \label{err-singl}
\EE\|\widehat{\beta}_l(t)-\beta_l(t)\|^2_2=O\left( AM_l^{-2\gamma_l}+\sigma^2n^{1-\alpha}\EE\left[\left(\tilde{\boldsymbol{\phi}}_l(t)\right)^T\left[\Phi^T\Phi\right]^{-1}\left(\tilde{\boldsymbol{\phi}}_l(t)\right)\right]_{h, \mu}\right).
\ee
  \end{theorem}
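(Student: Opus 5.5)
The plan is to decompose the error using \fr{Y-syst-phi}. Substituting \fr{Y-syst} into \fr{Theta-hat} gives
\[
\widehat{\Theta}-\Theta=\left[\Phi^T\Phi\right]^{-1}\Phi^T\boldsymbol{\rho}+\sigma\left[\Phi^T\Phi\right]^{-1}\Phi^T\boldsymbol{\varepsilon}_n .
\]
Hence
\[
\widehat{\beta}_l(t)-\beta_l(t)=\left(\tilde{\boldsymbol{\phi}}_l(t)\right)^T\left[\Phi^T\Phi\right]^{-1}\Phi^T\boldsymbol{\rho}+\sigma\left(\tilde{\boldsymbol{\phi}}_l(t)\right)^T\left[\Phi^T\Phi\right]^{-1}\Phi^T\boldsymbol{\varepsilon}_n-\rho_l(t).
\]
Applying $(a+b+c)^2\le 3(a^2+b^2+c^2)$, the squared error splits into a pure truncation term $\rho_l$, a projected truncation term, and a stochastic term. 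The norm $\|\cdot\|_2$ is understood as the $L^2$ norm with respect to the design measure, i.e. the expectation $[\cdot]_{h,\mu}$ over a fresh $(t,{\bf x})$. Under this norm the weighted basis $\tilde{\phi}_k$ is orthonormal, so Parseval can be used throughout.

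\textbf{Bias terms.} For the truncation term, orthonormality gives
\[
\|\rho_l\|^2=\sum_{k\ge M_l}\theta_{lk}^2\le M_l^{-2\gamma_l}\sum_{k\ge M_l}k^{2\gamma_l}\theta_{lk}^2\le A_lM_l^{-2\gamma_l}
\]
by Assumption A.5. For the projected term, $\Phi[\Phi^T\Phi]^{-1}\Phi^T$ is an orthogonal projection in $\mathbb{R}^n$, so
\[
\boldsymbol{\rho}^T\Phi[\Phi^T\Phi]^{-1}\tilde{\boldsymbol{\phi}}_l\tilde{\boldsymbol{\phi}}_l^T[\Phi^T\Phi]^{-1}\Phi^T\boldsymbol{\rho}\le \lambda_{\max}\!\left([\Phi^T\Phi]^{-1/2}\tilde{\boldsymbol{\phi}}_l\tilde{\boldsymbol{\phi}}_l^T[\Phi^T\Phi]^{-1/2}\right)\|\boldsymbol{\rho}\|^2 .
\]
Integrating over $t$ turns $\tilde{\boldsymbol{\phi}}_l\tilde{\boldsymbol{\phi}}_l^T$ into ${\bf I}_l$, using orthonormality of $\tilde\phi_k$ under $h$. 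This leaves roughly $\lambda_{\max}([\Phi^T\Phi]^{-1})\,\|\boldsymbol{\rho}\|^2$.

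Next, $\EE\|\boldsymbol{\rho}\|^2=n\,\EE(\sum_d\rho_d(t)X_d)^2$. By Assumption A.2 (and Cauchy--Schwarz), together with the Sobolev bound, this is $O(n\sum_d A_dM_d^{-2\gamma_d})$. Combined with $\lambda_{\max}([\Phi^T\Phi]^{-1})\asymp n^{-1}$ (the same mechanism as in Lemma \ref{lem-ephiphi}, via the law of large numbers $n^{-1}\Phi^T\Phi\to$ block matrix with eigenvalues bounded below, using A.4), the projected bias is of the same order as the truncation bias. This is collected into the $AM_l^{-2\gamma_l}$ term, with $A$ a generic constant.

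\textbf{Variance term.} Conditionally on $(t_i,{\bf x}_i)$, and using independence of $\boldsymbol{\varepsilon}_n$ from the design, the conditional second moment is
\[
\sigma^2\,\tilde{\boldsymbol{\phi}}_l^T[\Phi^T\Phi]^{-1}\Phi^T\Sigma_n\Phi[\Phi^T\Phi]^{-1}\tilde{\boldsymbol{\phi}}_l .
\]
Bounding $\Sigma_n\preceq \lambda_{\max}(\Sigma_n){\bf I}\le c_2n^{1-\alpha}{\bf I}$ by Assumption A.1 collapses the sandwich to
\[
\sigma^2c_2n^{1-\alpha}\,\tilde{\boldsymbol{\phi}}_l^T[\Phi^T\Phi]^{-1}\tilde{\boldsymbol{\phi}}_l .
\]
Taking expectations over the design and over $t$ yields exactly the second term of \fr{err-singl}.

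\textbf{Main obstacle.} The delicate step is the projected bias. It needs control of $\lambda_{\max}([\Phi^T\Phi]^{-1})$ in expectation, which in turn requires $\Phi^T\Phi$ to be well conditioned uniformly, not just its trace as in Lemma \ref{lem-ephiphi}. I would first try to argue on the high-probability event where $\|n^{-1}\Phi^T\Phi-\EE[n^{-1}\Phi^T\Phi]\|$ is small, using a matrix concentration bound with $M_l$ growing slowly, and treat the complement crudely. If that becomes too technical, I would instead accept a bound $\EE\lambda_{\max}\lesssim n^{-1}$ as implicit in the asymptotics of Lemma \ref{lem-ephiphi}.
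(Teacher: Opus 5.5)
Your decomposition, the truncation bound $\sum_{k\ge M_l}\theta_{lk}^2\le A_lM_l^{-2\gamma_l}$, and the variance step (collapsing the sandwich with $\lambda_{\max}(\Sigma_n)\le c_2n^{1-\alpha}$) are exactly the paper's. The routes differ on the projected bias $\tilde{\boldsymbol{\phi}}_l(t)^T[\Phi^T\Phi]^{-1}\Phi^T\boldsymbol{\rho}$. The paper bounds its second moment by $\EE[\lambda_{\max}(\boldsymbol{\rho}\boldsymbol{\rho}^T)\,\mathrm{tr}([\Phi^T\Phi]^{-1}{\bf I}_l)]$. It then uses the deterministic bound $\rho_l^2(t_i)\le A_l/(m_oM_l)$ (from $|\phi_k|\le 1$, $h\ge m_o$ and Cauchy--Schwarz) to pull $\boldsymbol{\rho}$ out of the expectation, and absorbs what is left into the variance term, since both carry the factor $\mathrm{tr}(\EE[(\Phi^T\Phi)^{-1}]{\bf I}_l)$. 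This needs no conditioning argument for $\Phi^T\Phi$. Do not switch to that shortcut, though. It bounds $\lambda_{\max}(\boldsymbol{\rho}\boldsymbol{\rho}^T)=\|\boldsymbol{\rho}\|^2=\sum_{i=1}^n\bigl(\sum_d\rho_d(t_i)x_{di}\bigr)^2$ by the bound for a single $\rho_l^2(t_i)$. Once the factor $n$ is restored and $\mathrm{tr}(\EE[(\Phi^T\Phi)^{-1}]{\bf I}_l)\asymp M_l/n$ is used, the bound is of order $M_l\sum_dA_d/(m_oM_d)$. This does not tend to zero (its $d=l$ term is already $A_l/m_o$) and fits in neither term of \fr{err-singl}. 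Your combination, $\lambda_{\max}([\Phi^T\Phi]^{-1})$ in place of a trace and an $L^2$ bound on $\EE\|\boldsymbol{\rho}\|^2$ in place of a sup bound, is the one that can produce the right order.

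That step, however, is where your proposal has a genuine gap, and neither fallback closes it.

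(i) As displayed, $\lambda_{\max}$ of your rank-one matrix equals $\tilde{\boldsymbol{\phi}}_l^T[\Phi^T\Phi]^{-1}\tilde{\boldsymbol{\phi}}_l$. Its average over $t$ is $\mathrm{tr}([\Phi^T\Phi]^{-1}{\bf I}_l)\asymp M_l/n$, i.e.\ exactly the extra factor $M_l$. You must first average over the evaluation point $t$ (independent of the sample) inside the quadratic form, obtaining $\boldsymbol{\rho}^T\Phi[\Phi^T\Phi]^{-1}{\bf I}_l[\Phi^T\Phi]^{-1}\Phi^T\boldsymbol{\rho}\le\lambda_{\max}([\Phi^T\Phi]^{-1})\|\boldsymbol{\rho}\|^2$, and only then use the projection.

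(ii) $\lambda_{\max}([\Phi^T\Phi]^{-1})$ and $\|\boldsymbol{\rho}\|^2$ are functions of the same design, so the expectation does not factor. Lemma~\ref{lem-ephiphi} only yields $\EE\lambda_{\max}([\Phi^T\Phi]^{-1})\le\mathrm{tr}(\EE[(\Phi^T\Phi)^{-1}])\asymp\sum_dM_d/n$, which loses the factor $M$ again. In the high-probability route there is no crude bound on the complement event, where $[\Phi^T\Phi]^{-1}$ is uncontrolled. You must either modify the estimator (e.g.\ set $\widehat{\Theta}=0$ when $\lambda_{\min}(\Phi^T\Phi/n)$ falls below a threshold) or add a moment assumption on $\lambda_{\max}([\Phi^T\Phi]^{-1})$. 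On the good event, the eigenvalues of the limiting Gram matrix are bounded below by $\lambda_{\min}({\bf W})=\omega_0$. That bound comes from A.3, which is not among the theorem's hypotheses, not from A.4. A.4 only supplies $\|\tilde{\phi}_k\|_\infty\le m_o^{-1/2}$ for the concentration step, which also needs a growth restriction such as $(\sum_dM_d)\log(\sum_dM_d)=o(n)$.

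(iii) Because $\boldsymbol{\rho}$ contains every $\rho_d$, your argument gives $O(\sum_dA_dM_d^{-2\gamma_d})$. Relabelling this as $AM_l^{-2\gamma_l}$ with a generic constant is not legitimate when the $\gamma_d$ or $M_d$ differ. The contamination is real: with ${\bf r}=2$, $W_{ld}\ne 0$ and $M_d<M_l$, the population least-squares coefficient of block $l$ at an index $k\in[M_d,M_l)$ is shifted by $W_{ld}\theta_{dk}/W_{ll}$. The honest outcome of your route is the bound with $\sum_dA_dM_d^{-2\gamma_d}$ in place of $AM_l^{-2\gamma_l}$. That is what the vector result, Theorem~\ref{th:upperbds-vectr}, actually uses.
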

Notice that, since $\left(\tilde{\boldsymbol{\phi}}_l(t)\right)$ and $\left[\Phi^T\Phi\right]^{-1}$ are independent, we have by the orthogonality of Laguerre function basis 
\beqn
\EE\left[\left(\tilde{\boldsymbol{\phi}}_l(t)\right)^T\left[\Phi^T\Phi\right]^{-1}\left(\tilde{\boldsymbol{\phi}}_l(t)\right)\right]_{h, \mu}&=&\EE\left[tr\left(\left(\tilde{\boldsymbol{\phi}}_l(t)\right)^T\left[\Phi^T\Phi\right]^{-1}\left(\tilde{\boldsymbol{\phi}}_l(t)\right)\right)\right]_{h, \mu}\nonumber\\
&=&\EE\left[tr\left(\left[\Phi^T\Phi\right]^{-1}\EE\left[\left(\tilde{\boldsymbol{\phi}}_l(t)\right)\left(\tilde{\boldsymbol{\phi}}_l(t)\right)^T\right]_h\right)\right]_{h, \mu}\nonumber\\
&=&tr\left(\EE\left[\left[\Phi^T\Phi\right]^{-1}\right]_{h, \mu}{\bf I}_l\right).
\eeqn
Hence, by {\bf theorem \ref{th:upperbds-indiv}} and result \fr{tr-l-phi}, the optimal choice of $M_l$, $l=1, 2, \cdots, {\bf r}$, is
\be \label{M-optimal}
M^*_l=\left[A_l^2n^{\alpha}\right]^{\frac{1}{2\gamma_l+1}}. 
\ee
Notice that the truncation levels \fr{M-optimal} depend on $\gamma_l$, and therefore, estimator \fr{bet-l-est} is not adaptive. However, in practice, the selection of the truncation levels $M_1$, $M_2$, $\cdots$, $M_{\bf r}$, can be accomplished adaptively via cross validation. Observe that since $M_l$, $l=1, 2, \cdots, {\bf r}$, are integers, one can starts with lower integer values and keep increasing the values until the mean-integrated squared error (MISE) reaches the lowest point. Then use the combination $(M_1, M_2, \cdots, M_{\bf r})$ that yields the lowest MISE to choose a final estimator. 
\section{Estimator of the Vector of Functional Coefficients, ${\boldsymbol{\beta}}(t)$, and Asymptotic Analysis}
Let $\widehat{\boldsymbol{\beta}}(t)$ be the ${\bf r}$-dimensional vector such that 
\be\label{bet-vec-est}
\widehat{\boldsymbol{\beta}}(t)=\left(\widehat{\beta}_1(t), \widehat{\beta}_2(t), \cdots, \widehat{\beta}_{\bf r}(t)\right)^T,
\ee
 where $\widehat{\beta}_l(t)$, $l=1, 2, \cdots, {\bf r}$, are defined in \fr{bet-l-est}. Denote by $\Psi(t)$ the $(\sum^{\bf r}_{l=1}M_l)\times{\bf r}$  matrix whose columns are the vectors $\tilde{\boldsymbol{\phi}}_l(t)$, $l=1, 2, \cdots, {\bf r}$,  that is, 
\beqns
\Psi(t)=\left(\tilde{\boldsymbol{\phi}}_1(t), \tilde{\boldsymbol{\phi}}_2(t), \cdots, \tilde{\boldsymbol{\phi}}_{\bf r}(t)\right).
\eeqns
 \begin{theorem}\label{th:upperbds-vectr} Let $\widehat{\boldsymbol{\beta}}(t)$ be defined in \fr{bet-vec-est}, and let  Assumptions {\bf A.1}, {\bf{A.2}}, {\bf{A.4}} and {\bf{A.5}} hold. Then, as $n\rightarrow \infty$,
\be \label{err-full}
\EE\|\widehat{\boldsymbol{\beta}}(t)-\boldsymbol{\beta}(t)\|^2=O\left(\sum^{\bf r}_{l=1}A_l^2M_l^{-2\gamma_l}+\sigma^2n^{1-\alpha}tr\left(\EE\left[\left[\Psi(t)\right]^T\left[\Phi^T\Phi\right]^{-1}\left[\Psi(t)\right]\right]\right)\right). 
\ee
  \end{theorem}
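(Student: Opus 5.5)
We would mirror the argument for Theorem \ref{th:upperbds-indiv}, now working with the whole vector. Write $\widehat{\boldsymbol{\beta}}(t)=[\Psi(t)]^T\widehat{\Theta}$ and $\boldsymbol{\beta}(t)=[\Psi(t)]^T\Theta+\boldsymbol{\rho}(t)$, where $\boldsymbol{\rho}(t)=(\rho_1(t),\cdots,\rho_{\bf r}(t))^T$. Using \fr{Y-syst-phi} and \fr{Theta-hat} we get the decomposition
\beqns
\widehat{\boldsymbol{\beta}}(t)-\boldsymbol{\beta}(t)=[\Psi(t)]^T\left[\Phi^T\Phi\right]^{-1}\Phi^T\boldsymbol{\rho}-\boldsymbol{\rho}(t)+\sigma[\Psi(t)]^T\left[\Phi^T\Phi\right]^{-1}\Phi^T\boldsymbol{\varepsilon}_n .
\eeqns
By $(a+b)^2\le 2a^2+2b^2$ and the independence of $\boldsymbol{\varepsilon}_n$ from $(t_i,{\bf x}_i)$, the cross terms involving the noise vanish. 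The mean squared error is then bounded by a constant times three pieces: a bias term $\EE\|\boldsymbol{\rho}(t)\|^2$, a projected-bias term $\EE\|[\Psi(t)]^T[\Phi^T\Phi]^{-1}\Phi^T\boldsymbol{\rho}\|^2$, and a stochastic term $\sigma^2\EE\|[\Psi(t)]^T[\Phi^T\Phi]^{-1}\Phi^T\boldsymbol{\varepsilon}_n\|^2$.

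\emph{Stochastic term.} Condition on the design and write the squared norm as a trace:
\beqns
tr\left([\Psi(t)]^T[\Phi^T\Phi]^{-1}\Phi^T\Sigma_n\Phi[\Phi^T\Phi]^{-1}\Psi(t)\right).
\eeqns
Assumption {\bf A.1} gives $\Phi^T\Sigma_n\Phi\preceq c_2n^{1-\alpha}\Phi^T\Phi$. Since conjugation preserves the Loewner order, this term is at most
\beqns
c_2\sigma^2n^{1-\alpha}\,tr\left([\Psi(t)]^T[\Phi^T\Phi]^{-1}\Psi(t)\right).
\eeqns
Taking expectations over $h$ and $\mu$ yields exactly the second term in \fr{err-full}. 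It is also the sum over $l$ of the single-coefficient stochastic terms in \fr{err-singl}, because the $(l,l)$ diagonal entry is $\tilde{\boldsymbol{\phi}}_l(t)^T[\Phi^T\Phi]^{-1}\tilde{\boldsymbol{\phi}}_l(t)$.

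\emph{Bias terms.} By orthonormality, $\EE_h\rho_l^2(t)=\sum_{k\ge M_l}\theta_{lk}^2$, with respect to $h$ for the $\tilde{\phi}_k$ basis. Assumption {\bf A.5} then bounds this by $M_l^{-2\gamma_l}\sum_k k^{2\gamma_l}\theta_{lk}^2\le A_lM_l^{-2\gamma_l}$, which is within the stated $A_l^2M_l^{-2\gamma_l}$ up to constants. For the projected bias, the matrix $\Phi[\Phi^T\Phi]^{-1}\Phi^T$ is a projection, so $\boldsymbol{\rho}^T\Phi[\Phi^T\Phi]^{-1}\Phi^T\boldsymbol{\rho}\le\|\boldsymbol{\rho}\|^2$. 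We would then bound
\beqns
\|[\Psi(t)]^T[\Phi^T\Phi]^{-1}\Phi^T\boldsymbol{\rho}\|^2\le\lambda_{\max}\left([\Phi^T\Phi]^{-1/2}\Psi(t)\Psi(t)^T[\Phi^T\Phi]^{-1/2}\right)\|\boldsymbol{\rho}\|^2 .
\eeqns
After averaging over $t$, the matrix $\EE_h[\Psi\Psi^T]$ is the identity, so this is of order $\lambda_{\max}([\Phi^T\Phi]^{-1})\,\EE\|\boldsymbol{\rho}\|^2$. Here $\EE\|\boldsymbol{\rho}\|^2=n\,\EE(\sum_l\rho_l(t)x_l)^2\le n\,\omega_1\sum_lA_lM_l^{-2\gamma_l}$, using Assumptions {\bf A.2}/{\bf A.3}. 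Since $\lambda_{\max}([\Phi^T\Phi]^{-1})=O_P(1/n)$, the projected bias is $O(\sum_lA_lM_l^{-2\gamma_l})$.

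\emph{Main obstacle.} The hard step is justifying $\EE$ of the product involving $\lambda_{\max}([\Phi^T\Phi]^{-1})$, i.e.\ showing that $\Phi^T\Phi/n$ is uniformly well conditioned so that the bound holds in expectation and not only in probability. We would first try to lean on the same design analysis behind Lemma \ref{lem-ephiphi}. Specifically, $\EE[\Phi^T\Phi]/n$ is block-structured with blocks $\EE[X_lX_{l'}]\,\EE_h[\tilde{\phi}_k\tilde{\phi}_{k'}]$. Its eigenvalues are controlled by $\omega_0,\omega_1$ and by $h\ge m_o$ (Assumption {\bf A.4}), and a matrix concentration argument, valid when $\sum_lM_l=o(n)$, transfers this to $\Phi^T\Phi$. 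Failing that, we would simply follow the proof of Theorem \ref{th:upperbds-indiv} and sum the ${\bf r}$ individual bounds \fr{err-singl}.
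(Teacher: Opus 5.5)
Your three-term decomposition, the bound $\Phi^T\Sigma_n\Phi\preceq c_2n^{1-\alpha}\Phi^T\Phi$ for the noise term, and $\EE_h\rho_l^2(t)=\sum_{k\ge M_l}\theta_{lk}^2\le A_lM_l^{-2\gamma_l}$ for the pure bias are exactly the paper's. You part ways on the projected-bias term.

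The paper bounds it by $\EE[\lambda_{\max}(\boldsymbol{\rho}\boldsymbol{\rho}^T)\,tr([\Psi(t)]^T[\Phi^T\Phi]^{-1}\Psi(t))]$. Then, ``similar to'' Theorem \ref{th:upperbds-indiv}, it inserts the sup-norm estimate $\rho_l^2(t_i)\le A_l/(m_oM_l)$ (which needs $\gamma_l\ge 1$) and declares the term of lower order than the variance term. But $\lambda_{\max}(\boldsymbol{\rho}\boldsymbol{\rho}^T)=\|\boldsymbol{\rho}\|^2$ is a sum of $n$ terms. That factor is lost when the paper passes from \fr{rhorohbnd} to $\frac{A}{m_oM_l}tr(\cdot)$. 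With it restored, the trace bound plus Lemma \ref{lem-ephiphi} gives only $O(\sum_lA_lM_l^{-1}\sum_{l'}M_{l'})$, which does not vanish.

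Your route through $\Phi[\Phi^T\Phi]^{-1}\Phi^T\preceq{\bf I}$ and $\EE_h[\Psi(t)\Psi(t)^T]={\bf I}$ avoids that dimension loss. It puts the projected bias at the order of the pure bias, which is all \fr{err-full} needs. This works provided you average over $t$ \emph{before} taking eigenvalues: with $v=[\Phi^T\Phi]^{-1}\Phi^T\boldsymbol{\rho}$, one has $\EE_t\|[\Psi(t)]^Tv\|^2=\|v\|^2\le\lambda_{\max}([\Phi^T\Phi]^{-1})\|\boldsymbol{\rho}\|^2$. If you instead average your displayed pointwise inequality afterwards, Jensen runs the wrong way because $\lambda_{\max}$ is convex. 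The natural bound $\EE_t\lambda_{\max}(\cdot)\le tr([\Phi^T\Phi]^{-1})$ then reintroduces the paper's dimension factor.

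The step you flag as the main obstacle is a genuine gap, and matrix concentration alone does not close it. Knowing $\lambda_{\max}([\Phi^T\Phi]^{-1})=O_P(1/n)$ says nothing about $\EE[\lambda_{\max}([\Phi^T\Phi]^{-1})\|\boldsymbol{\rho}\|^2]$, whose two factors are dependent through the design. Concentration (rows bounded via $|\phi_k|\le 1$, $h\ge m_o$ and the compact support of $\mu$) gives an event $\mathcal{E}=\{\lambda_{\min}(\Phi^T\Phi/n)\ge\omega_0/2\}$ with $P(\mathcal{E}^c)$ tiny once $\sum_lM_l\log n=o(n)$. On $\mathcal{E}$ your bound yields $\frac{2}{n\omega_0}\EE\|\boldsymbol{\rho}\|^2\le\frac{2\omega_1}{\omega_0}\sum_lA_lM_l^{-2\gamma_l}$. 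On $\mathcal{E}^c$, however, nothing in the assumptions controls $\|[\Phi^T\Phi]^{-1}\Phi^T\boldsymbol{\rho}\|^2$ for the untruncated least-squares estimator.

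You need an extra ingredient. Possible options:
\begin{itemize}
\item state the bound on $\mathcal{E}$, i.e.\ conditionally on the design;
\item set $\widehat{\Theta}=0$ on $\mathcal{E}^c$, so that this event costs at most $P(\mathcal{E}^c)\sum_l\|\beta_l\|^2$;
\item assume a moment bound such as $\EE[\lambda^2_{\max}([\Phi^T\Phi]^{-1})]=O(n^{-2})$ and apply Cauchy--Schwarz.
\end{itemize}
Your fallback of summing the bounds \fr{err-singl} does not help, since the proof of Theorem \ref{th:upperbds-indiv} handles the same term with the same trace bound. The paper never addresses this point either.

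Finally, $\lambda_{\min}(\EE[\Phi^T\Phi]/n)\ge\omega_0$ requires Assumption {\bf A.3}. It is not among the theorem's hypotheses and must be added, as it is in Lemma \ref{lem-ephiphi}.
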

Observe that by orthogonality of the elements of matrix $\Psi(t)$ and {\bf Lemma \ref{lem-ephiphi}}, $\EE\left[\left[\Psi(t)\right]^T\left[\Phi^T\Phi\right]^{-1}\left[\Psi(t)\right]\right]$ will have $\sum^{\bf r}_{l=1}M_l$ nonzero elements, and therefore by choosing $(M_1, M_2, \cdots, M_{\bf r})$ according to \fr{M-optimal},  \fr{err-full} gives
\be\label{uppr-bnd}
\EE\|\widehat{\boldsymbol{\beta}}(t)-\boldsymbol{\beta}(t)\|^2\leq C\sum^{\bf r}_{l=1}A^2_l \left[\frac{\sigma^2}{A_l^2n^{\alpha}}\right]^{\frac{2\gamma_l}{2\gamma_l+1}}. 
\ee
\section{Minimax Lower Bounds for Estimating Vector of Functional Coefficients, $\widehat{\boldsymbol{\beta}}(t)$}
We define the minimax $L^2$-risk over a set $\Theta$ as 
\beqns
\mathcal{R}(\Theta)=\inf_{\tilde{\bf f}}\sup_{{\bf f}\in \Theta}\EE\|\tilde{\bf f}-{\bf f}\|^2,
\eeqns
where the infimum is taken over all possible vector of estimators $\tilde{\bf f}$ of ${\bf f}$. 
  \begin{theorem}\label{th:lowerbds} Let  {\bf Assumptions} {\bf A.1}, {\bf A.2}, {\bf A.3} and {\bf A.5} hold. Let $\boldsymbol{\gamma}=(\gamma_1, \gamma_2, \cdots, \gamma_{\bf r})$. Then, as $n\rightarrow \infty$,
 \be \label{lowerbds1}
 \mathcal{R}({{B^{\boldsymbol{\gamma}}}}(A))\geq C\sum^{\bf r}_{l=1}A^2_l \left[\frac{\sigma^2}{A_l^2n^{\alpha}}\right]^{\frac{2\gamma_l}{2\gamma_l+1}}. 
    \ee
 \end{theorem}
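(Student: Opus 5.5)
The plan is the standard reduction of the minimax risk to a multiple-hypothesis testing problem, in the form of Assouad's cube or the Varshamov--Gilbert bound combined with Fano's lemma (Tsybakov, 2009). Because the risk of a vector estimator is the sum of the componentwise squared $L^2$ errors, it suffices to bound $\mathcal{R}$ below by the sum over $l$ of the lower bounds obtained for each coordinate separately. So I will build a hypercube of alternatives that perturbs all coordinates at once, one independent block per $l$, and apply Assouad's lemma to the whole product cube. Its sum structure then produces the sum over $l$ in \fr{lowerbds1}. The noise is Gaussian with covariance $\sigma^2\Sigma_n$. Conditionally on the design $(t_i,{\bf x}_i)$, the Kullback--Leibler divergence between the laws under two regression functions $\boldsymbol{\beta},\boldsymbol{\beta}'$ is therefore
\[
\frac{1}{2\sigma^2}\,{\bf d}^T\Sigma_n^{-1}{\bf d}\le \frac{1}{2\sigma^2 c_1 n^{1-\alpha}}\,\|{\bf d}\|^2,
\qquad d_i=\sum_{l=1}^{\bf r}\bigl(\beta_l-\beta'_l\bigr)(t_i)x_{li},
\]
by Assumption A.1. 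Averaging over the design and using Assumption A.3 together with the density $h$ gives
\[
\EE\|{\bf d}\|^2\le n\,\omega_1\sum_l\int(\beta_l-\beta_l')^2h .
\]
For this step I will work with perturbations expanded in the weighted basis $\tilde\phi_k=\phi_k/\sqrt h$, so that $\int(\beta_l-\beta_l')^2h$ equals the Euclidean distance between Laguerre coefficient vectors. (If one prefers the unweighted basis $\phi_k$, a bounded density gives the same conclusion.) Overall the KL divergence is of order $n^{\alpha}\sigma^{-2}\sum_l\|\theta_l-\theta_l'\|^2$, so the effective noise level is $\sigma^2/n^\alpha$, which matches the upper bound \fr{uppr-bnd}.

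The construction is as follows. For each $l$ take a block of indices $k\in\{M_l,\dots,2M_l-1\}$ and define
\[
\beta_l^{\boldsymbol\omega}=\delta_l\sum_{k=M_l}^{2M_l-1}\omega_{lk}\tilde\phi_k,\qquad \omega_{lk}\in\{0,1\}.
\]
Membership in ${\bf B}^{\gamma_l}(A_l)$ requires $\delta_l^2M_l^{2\gamma_l+1}\lesssim A_l$ (up to the $A_l$ versus $A_l^2$ normalization used in the paper). Two configurations differing only in the single entry $\omega_{lk}$ have KL divergence of order $n^\alpha\delta_l^2/\sigma^2$. I will choose $\delta_l^2\asymp \sigma^2 n^{-\alpha}$, which makes this divergence bounded by a small constant. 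Balancing the two constraints gives $M_l\asymp (A_l^2n^\alpha/\sigma^2)^{1/(2\gamma_l+1)}$, which is exactly $M_l^*$ from \fr{M-optimal}.

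Assouad's lemma then bounds the risk below by a constant times $\sum_l M_l\delta_l^2$. With the choices above, $M_l\delta_l^2\asymp A_l^2\bigl[\sigma^2/(A_l^2n^\alpha)\bigr]^{2\gamma_l/(2\gamma_l+1)}$, and summing over $l$ gives \fr{lowerbds1}.

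The main obstacle is the KL step. First, the design is random, so the divergence must be controlled after integrating over $(t_i,{\bf x}_i)$; I would condition on the design and use convexity or the expectation of the KL, since the design law is the same under all hypotheses. Second, the long-memory covariance enters only through $\lambda_{\min}(\Sigma_n)$. Third, the perturbations of the different $\beta_l$ are coupled through the products $x_{li}$, and this is exactly where Assumption A.3 (a bounded $\omega_1$) is needed to keep the coordinates separated.
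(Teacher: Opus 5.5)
Your proof is correct, but it uses a different reduction lemma from the paper's. The hypercube, the Kullback--Leibler computation, the amplitude and the balancing are all the same.

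\textbf{The paper's route.} It invokes the Fano-type Lemma A.1 of Bunea et al.~(2007) together with the Varshamov--Gilbert bound. All $2^{\sum_l M_l}$ hypotheses $\beta_{l,\omega}=\lambda_l\sum_{k=0}^{M_l-1}\omega_{l,k}\tilde\phi_k$, with $\lambda_l=A_lM_l^{-(\gamma_l+1/2)}$, are used at once. Condition (ii) requires the divergence between \emph{any} two of them to be at most a fixed fraction of $\sum_l M_l$. Such pairs differ in all blocks simultaneously, so the paper bounds the cross terms through $\lambda_{\max}({\bf W})$.

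\textbf{Your route.} You apply Assouad's lemma to the product cube, so you only need the divergence between neighbours to be bounded by a constant. You place the blocks at $k=M_l,\dots,2M_l-1$, which makes $(k\vee 1)^{2\gamma_l}\asymp M_l^{2\gamma_l}$ on each block. The paper's low-frequency blocks also work, since $\sum_{k<M_l}(k\vee1)^{2\gamma_l}\le M_l^{2\gamma_l+1}$. The choice $\delta_l^2\asymp\sigma^2 n^{-\alpha}$ and the resulting $M_l$ coincide with the paper's $\lambda_l^2$ and \fr{lem-bwd}.

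\textbf{What your route buys.}
\begin{itemize}
\item No Varshamov--Gilbert extraction is needed, and the loss is handled coordinatewise with block-dependent weights $\delta_l^2$. In the paper, the bound $\|\boldsymbol{\beta}_\omega-\boldsymbol{\beta}_{\tilde\omega}\|_h^2\ge\frac18\sum_l A_l^2M_l^{-2\gamma_l}$ is derived from a Hamming bound on the concatenated binary string. That step is valid only because, at the balanced choice of $M_l$, the amplitudes $\lambda_l^2\asymp\sigma^2n^{-\alpha}$ are comparable across $l$.
\item A single flip changes only one $\beta_l$. Then $d_i=\delta_l\tilde\phi_k(t_i)x_{li}$ and $\EE\|{\bf d}\|^2=n\delta_l^2\,\EE[X_l^2]$, using independence of $t$ and ${\bf X}$, which both proofs assume tacitly. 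So in your argument Assumption A.2 alone suffices. Your closing remark, that A.3 is ``exactly'' where the coupling through the $x_{li}$ is controlled, describes the paper's Fano route, not yours.
\end{itemize}

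\textbf{What the Fano route buys.} It gives a lower bound in probability, not only in expectation, and it does not need the loss to split additively over coordinates. Here the squared $L^2(h)$ loss is additive in the orthonormal system $\{\tilde\phi_k\}$, so Assouad is the more natural tool.

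\textbf{One caveat.} Your parenthetical about switching to the unweighted basis $\phi_k$ needs $\sup_t h(t)<\infty$, which is not among the theorem's hypotheses. It also changes the norm in which separation is measured. Keep the weighted basis $\tilde\phi_k$, as the paper does.
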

  \begin{remark}	
 Based on \fr{uppr-bnd} and  \fr{lowerbds1}, estimator  \fr{bet-vec-est} for vector ${\boldsymbol{\beta}}(t)$ with choices \fr{M-optimal} is asymptotically optimal in the minimax sense. However, the estimator is not adaptive since the choices $M_l$ depend on the smoothness of the functional coefficients, $\beta_l(t)$. 
    \end{remark}
       \begin{remark}	
Notice that the estimator \fr{bet-l-est} for individual functional coefficients, $\beta_l(t)$, may not be optimal in the sense of Fan and Zhang~(1999), unless the regularities $\gamma_l$, $l=1, ,2 \cdots, {\bf r}$, are the same. However, in the present work, we define optimality with respect to the vector of coefficients, ${\boldsymbol{\beta}}(t)$. 
    \end{remark}
   Fan and Zhang~(1999) introduced local linear estimator for the varying coefficient models where the functional coefficients are approximated locally via linear functions of the effect-modifying covariate and the least squares criterion is minimized. They showed that the estimators for the individual functional coefficients are not asymptotically optimal in minimax sense unless the functional coefficients have the same regularity.  They also concluded that optimality may be achieved using a certain multiple-step procedure. \\
Chiang et al.~(2001) used smoothing splines and penalized least squares to estimate the functional coefficients in varying coefficient model and cross validation to choose the smoothing parameters, and established the consistency of their proposed estimator. \\
Pensky and Klopp~(2013) investigated varying coefficient model in a high-dimensional setup and developed minimax theory in a non-asymptotic fashion. Their method relies on a general series solution and a penalization called group-LASSO, in which the covariates are assumed to cluster in groups. Therefore, their work does not directly relate to ours since the number of covariates in our setting is fixed and finite, that is, ${\bf r}<\infty$. \\
The advantage of the proposed Laguerre series-based method is that the tuning parameters, $M_1$, $M_2$, $\cdots$, $M_{\bf r}$, are integer-valued and therefore the minimization process of the MISE requires fewer viable candidates to choose from, unlike the kernel-based methods which rely on selecting bandwidths, $h_1$, $h_2$, $\cdots$, $h_{\bf r}$, which have strictly positive real values, or smoothing spline methods which rely on selecting the smoothing parameters, $\lambda_1$, $\lambda_2$, $\cdots$, $\lambda_{\bf r}$, which are positive real-valued. Therefore, the proposed Laguerre approach  may have a computational advantage over the kernel-based and smoothing spline methods in some specific circumstances. 
  \section{ Asymptotic Normality of the Estimator for  $\beta_l(t)$}
  \begin{theorem}\label{thnor}  (Asymptotic Normality).
Assume that the functional coefficients $\beta_l(t)$ belong to a Laguerre-Sobolev space with parameter $\gamma > 0$, the design matrix $\Phi$ satisfies the invertibility condition with high probability, and the errors $\varepsilon_i$ are stationary Gaussian with covariance matrix $\Sigma$ and long-memory parameter $\alpha \in (0,1)$. Let $\hat{\beta}_l(t)$ be the adaptive thresholding estimator defined by
\beqns
\hat{\beta}_l(t) = \left( \tilde{\boldsymbol{\phi}}_l(t) \right)^T \hat{\Theta},
\eeqns
where $\hat{\Theta} = (\Phi^T \Phi)^{-1} \Phi^T Y$, and the truncation level $M_l$ is chosen as $M_l \asymp n^{1 / (2\gamma+1)}$ to achieve minimax optimality.
Then, as $n \to \infty$,
\beqns
\sqrt{n^{\alpha}} \left( \widehat{\beta}_l(t) - \beta_l(t) \right) \xrightarrow{d} \mathcal{N} \left( 0, \sigma_l^2(t) \right),
\eeqns
where the asymptotic variance is
\beqns
\sigma_l^2(t) = \pi_{\alpha} \tilde{\boldsymbol{\phi}}_l(t) ^T \Gamma^{-1}   \tilde{\boldsymbol{\phi}}_l(t),
\eeqns
$\Gamma = \mathbb{E} [X_{l1}^2 \tilde{\phi}_k(t_1) \tilde{\phi}_j(t_1)]$, and $\xrightarrow{d}$ denotes convergence in distribution.
 \end{theorem}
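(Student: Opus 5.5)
We would split the error using \fr{Y-syst-phi}:
\beqns
\widehat{\beta}_l(t)-\beta_l(t)=\underbrace{\left(\tilde{\boldsymbol{\phi}}_l(t)\right)^T\left[\Phi^T\Phi\right]^{-1}\Phi^T\boldsymbol{\rho}-\rho_l(t)}_{\mbox{bias}}+\underbrace{\sigma\left(\tilde{\boldsymbol{\phi}}_l(t)\right)^T\left[\Phi^T\Phi\right]^{-1}\Phi^T\boldsymbol{\varepsilon}_n}_{\mbox{stochastic}}.
\eeqns
The plan has three steps: show that the stochastic term, conditionally on the design $(t_i,{\bf x}_i)$, is exactly Gaussian with the right variance; show that this conditional variance converges in probability to a deterministic limit; and show that the bias is $o_P(n^{-\alpha/2})$. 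Slutsky's lemma then gives the statement.

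\textbf{Stochastic term.} The errors are Gaussian and independent of the design. So, conditionally on $\mathcal{F}_n=\sigma\{(t_i,{\bf x}_i)\}$, the term $S_n=\sigma a_n^T\boldsymbol{\varepsilon}_n$, with $a_n=\Phi[\Phi^T\Phi]^{-1}\tilde{\boldsymbol{\phi}}_l(t)$, is exactly $\mathcal{N}(0,\sigma^2a_n^T\Sigma_na_n)$. No Lindeberg argument is needed. It therefore suffices to show that $n^{\alpha}a_n^T\Sigma_na_n\to\pi_\alpha\tilde{\boldsymbol{\phi}}_l(t)^T\Gamma^{-1}\tilde{\boldsymbol{\phi}}_l(t)$ in probability. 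The conditional characteristic function then converges, and dominated convergence upgrades this to unconditional convergence in distribution.

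To analyse the variance, write $\Phi^T\Phi=n\widehat{\Gamma}_n$ with $\widehat{\Gamma}_n=n^{-1}\sum_i\Phi_i\Phi_i^T$, where $\EE\widehat{\Gamma}_n=\Gamma$. By Assumptions A.2--A.4 and the orthonormality used in Lemma~\ref{lem-ephiphi}, $\Gamma$ has eigenvalues bounded away from $0$ and $\infty$. We then need a matrix concentration bound: matrix Bernstein gives $\|\widehat{\Gamma}_n-\Gamma\|_{op}=O_P(\sqrt{M\log M/n})$ with $M=\sum_lM_l$. This uses that $\sup_k\|\tilde{\phi}_k\|_\infty\le m_o^{-1/2}$, by A.4 and the boundedness of Laguerre functions, together with the compact support of $\mu$. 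Hence $[\Phi^T\Phi]^{-1}=n^{-1}\Gamma^{-1}(1+o_P(1))$ in operator norm.

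The variance becomes $n^{\alpha-2}v_n^T\Phi^T\Sigma_n\Phi v_n(1+o_P(1))$ with $v_n=\Gamma^{-1}\tilde{\boldsymbol{\phi}}_l(t)$. We would then show
\beqns
n^{\alpha-2}\Phi^T\Sigma_n\Phi=n^{\alpha-2}\sum_{i,j}\mbox{Cov}(\varepsilon_i,\varepsilon_j)\Phi_i\Phi_j^T\longrightarrow \pi_\alpha\Gamma,
\eeqns
in the quadratic-form sense along $v_n$. Here $\pi_\alpha$ is the constant arising from the long-memory autocovariance, $\lim n^{\alpha-2}\sum_{i,j}\mbox{Cov}(\varepsilon_i,\varepsilon_j)$ in the regime of A.1. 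The diagonal $i=j$ and off-diagonal parts are handled separately, using the i.i.d.\ design: the off-diagonal terms have mean $\EE\Phi_1\EE\Phi_1^T$, and their fluctuations are controlled by a variance computation. This yields the variance $\pi_\alpha v_n^T\Gamma v_n=\pi_\alpha\tilde{\boldsymbol{\phi}}_l(t)^T\Gamma^{-1}\tilde{\boldsymbol{\phi}}_l(t)$, which may be read either via the limiting design matrix or after normalisation.

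\textbf{Bias.} By Assumption A.5 and the uniform bound on $\tilde{\phi}_k$, we have $|\rho_l(t)|\le m_o^{-1/2}\sum_{k\ge M_l}|\theta_{lk}|\le C M_l^{1/2-\gamma}$ by Cauchy--Schwarz. Similarly, $\|n^{-1}\Phi^T\boldsymbol{\rho}\|$ is controlled by projecting $\boldsymbol{\rho}$ onto the column span. Its main part vanishes in expectation by orthogonality of $\tilde{\phi}_k$ (in $L^2(h)$) to the tail, leaving a fluctuation of order $O_P(\sqrt{M/n}\,\|\rho\|)$. Multiplying by $\|\tilde{\boldsymbol{\phi}}_l(t)\|=O(\sqrt{M_l})$ gives a bias of order $M_l^{1/2-\gamma}$ plus lower-order terms. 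With $M_l\asymp n^{1/(2\gamma+1)}$, the requirement $\sqrt{n^\alpha}M_l^{1/2-\gamma}\to0$ is an undersmoothing condition. We would impose it explicitly, taking $M_l$ slightly larger than the minimax choice or $\gamma$ large enough relative to $\alpha$.

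\textbf{Main obstacle.} The hard step is the variance limit. Because $\|\tilde{\boldsymbol{\phi}}_l(t)\|^2\asymp M_l\to\infty$, the quantity $\sigma_l^2(t)$ grows with $n$. We would therefore actually prove $(\widehat{\beta}_l(t)-\beta_l(t))/\sqrt{\mbox{Var}(\cdot\mid\mathcal{F}_n)}\to\mathcal{N}(0,1)$ and interpret the theorem's display with $\sigma^2_l(t)$ as the ($M_l$-dependent) normalising sequence. This in turn requires uniform control of the relative error in the matrix approximations, as $M\to\infty$ with $M\log M/n\to0$.
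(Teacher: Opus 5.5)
Your architecture is the paper's: the same split from \fr{Y-syst-phi}, exact Gaussianity given the design, $\Phi^T\Phi/n\to\Gamma$, a limit for $n^{\alpha-2}\Phi^T\Sigma_n\Phi$, negligible bias, and Slutsky. Matrix Bernstein for growing $M$, the conditional-to-unconditional step and the explicit undersmoothing condition are real improvements. But the step you single out fails, and for a more basic reason than the (correctly noted) divergence of $\sigma_l^2(t)$. The limit $n^{\alpha-2}\Phi^T\Sigma_n\Phi\to\pi_\alpha\Gamma$ is false for a stationary long-memory sequence, and your own diagonal/off-diagonal split shows it. With $v_n=\Gamma^{-1}\tilde{\boldsymbol{\phi}}_l(t)$, i.i.d.\ rows $\Phi_i$ and $\Sigma_{11}=\Var(\varepsilon_1)$,
\[
n^{\alpha-2}\,\EE\bigl[v_n^T\Phi^T\Sigma_n\Phi v_n\bigr]=n^{\alpha-1}\Sigma_{11}\,\tilde{\boldsymbol{\phi}}_l(t)^T\Gamma^{-1}\tilde{\boldsymbol{\phi}}_l(t)+\mu_n^2\,n^{\alpha-2}\sum_{i\neq j}\Sigma_{ij},\qquad \mu_n=(\EE\Phi_1)^T\Gamma^{-1}\tilde{\boldsymbol{\phi}}_l(t),
\]
and $n^{\alpha-2}\sum_{i\neq j}\Sigma_{ij}\to\pi_\alpha$. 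So $\pi_\alpha$ multiplies the rank-one matrix $\EE\Phi_1\EE\Phi_1^T$, not $\Gamma$, while the $\Gamma$-part carries the factor $n^{\alpha-1}\to0$. Your fluctuation bounds only concentrate the quadratic form around this mean. Heuristically, the low-frequency part of $\boldsymbol{\varepsilon}_n$ acts as a common level shift, and least squares passes it to $\widehat\beta_l(t)$ only through the projection of the constant vector onto the columns of $\Phi$. If $\EE X_l=0$ for all $l$, then $\mu_n=0$ and the noise term is of order $n^{-1/2}\|\tilde{\boldsymbol{\phi}}_l(t)\|$, so $n^{\alpha/2}$ is not even the right rate. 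The paper makes the same move in \fr{Taqlim}, citing Taqqu (1975). That result concerns $\mathbf{1}^T\Sigma_n\mathbf{1}=\Var(\sum_i\varepsilon_i)$, which with i.i.d.\ weights produces exactly the rank-one limit above. Appealing to A.1 does not help either. Stationarity forces $\lambda_{\min}(\Sigma_n)\le\Sigma_{11}$, which contradicts its lower bound when $\alpha<1$, and A.1 would in any case give two-sided bounds rather than a limit with constant $\pi_\alpha$.

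Your fallback, reading $\sigma_l^2(t)$ as an $M_l$-dependent normaliser, does not repair this. Given the design, the noise term is exactly Gaussian, so dividing by its conditional standard deviation gives $\mathcal{N}(0,1)$ for every $n$ without any limit theory. Apart from the bias, the only nontrivial content left is $n^{\alpha}a_n^T\Sigma_na_n/\sigma_l^2(t)\to1$ in probability, and the display refutes it. For instance, take equal truncation levels $M$. Then $\mu_n=c_l\,(P_M\sqrt{h})(t)/\sqrt{h(t)}$, where $P_M$ projects onto $\phi_0,\dots,\phi_{M-1}$ and $c_l$ is the $l$-th coefficient of the $L^2$ projection of the constant $1$ onto $X_1,\dots,X_{\bf r}$. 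This tends to $c_l$ wherever $(P_M\sqrt{h})(t)\to\sqrt{h(t)}$, while $\sigma_l^2(t)\to\infty$, so the ratio tends to $0$. What your machinery can actually deliver is a CLT with normaliser $\pi_\alpha\mu_n^2+n^{\alpha-1}\Sigma_{11}\tilde{\boldsymbol{\phi}}_l(t)^T\Gamma^{-1}\tilde{\boldsymbol{\phi}}_l(t)$, with the bias condition stated relative to it. When $\mu_n\to c_l\neq0$ the normaliser is bounded below, so your undersmoothing requirement $n^{\alpha/2}M_l^{1/2-\gamma}\to0$ suffices for the $\rho_l(t)$ part. That requirement is a genuine extra hypothesis; the paper's $\alpha/2<\gamma_l/(2\gamma_l+1)$ is not automatic either. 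Finally, your Cauchy--Schwarz tail bound on $\sum_{k\ge M_l}|\theta_{lk}|$ requires $\gamma>1/2$, whereas the statement assumes only $\gamma>0$.
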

Note that $\Gamma$ is a diagonal matrix, since $\tilde{\phi}_k(t)$ are orthonormal. Therefore, 
\be \label{asymp-var}
\sigma_l^2(t) = \pi_{\alpha} \tilde{\phi}_l(t) ^T \Gamma^{-1}   \tilde{\phi}_l(t)=\frac{\pi_{\alpha}}{\EE[X_{l1}^2]} \sum^{M_l-1}_{k=0}\tilde{\phi}^2_k(t).
\ee
 \section{Confidence intervals for  $\beta_l(t)$}
 Our results allow us to construct confidence intervals for the true function  $\beta_l(t)$.
Towards this end, we infer from {\bf Theorem \ref{thnor}} that, we have the confidence interval, as $n \to \infty$, 
\beqns
\hat{\beta}_l(t)\pm \phi_{1-\frac{\alpha}{2}}n^{-\alpha/2}\sigma_l(t),
\eeqns
where $\phi_{1-\frac{\alpha}{2}}$ is the lower quantile of order $(1-\frac{\alpha}{2})$ of the standard normal law.
\remark
If $\Sigma_n = \sigma^2 I$ (i.i.d. errors), the variance simplifies to
\beqns
\sigma_l^2(t) = \sigma^2 \left( \tilde{\boldsymbol{\phi}}_l(t) \right)^T \left( \mathbb{E} \left[ \frac{\Phi^T \Phi}{n} \right]^{-1} \right) \left( \tilde{\boldsymbol{\phi}}_l(t) \right),
\eeqns
which aligns with standard regression theory. 
\remark  A higher regularity ($\gamma_l$ large) accelerates the bias decay but imposes stricter smoothness constraints on $\beta_l(t)$.
\section{Hypothesis Test about the true $\beta_l(t)$}
Following {\bf Theorem \ref{thnor}}, we have 
\beqns
\sqrt{n^{\alpha}} \left( \hat{\beta}_l(t) - \beta_l(t) \right) \xrightarrow{d} \mathcal{N} \left( 0, \sigma_l^2(t) \right),
\eeqns
where $ \sigma_l^2(t) = \pi_{\alpha} \, \tilde{\phi}_l(t)^T \Gamma^{-1} \tilde{\phi}_l(t)$, and  $\Gamma=\mathbb{E} \left[ X_{l1}^2 \tilde{\phi}_k(t_1) \tilde{\phi}_j(t_1) \right]$. Let $H_o$ be the null hypothesis defined by
\beqns
 H_o : \beta_l(t_o) = \beta_{l,o}(t_o) \quad \text{for } a\ {fixed} \quad t_o,
\eeqns
where \(\beta_{l,o}(t_o)\) is a specific value. Then, under $H_o$
\beqns
T_n(t_o) = \frac{\sqrt{n^{\alpha}} \left( \hat{\beta}_l(t_o) - \beta_{l,o}(t_o) \right)}{\hat{\sigma}_l(t_o)} \xrightarrow{d} \mathcal{N}(0, 1),
\eeqns
where  \(\hat{\sigma}_l^2(t_o)\) is a consistent estimator of \(\sigma_l^2(t_o)\). We can estimate $\Gamma$ by
\beqns
\hat{\Gamma}_n = \frac{1}{n} \sum_{i=1}^n X_{l,i}^2 \tilde{\phi}_l(t_i) \tilde{\phi}_l(t_i)^T,
\eeqns
and then, estimate $\sigma_l^2(t)$ by
\beqns
\hat{\sigma}_l^2(t) = \hat{\pi}_{\alpha} \, \tilde{\phi}_l(t)^T \hat{\Gamma}_n^{-1} \tilde{\phi}_l(t),
\eeqns
where \(\hat{\pi}_{\alpha}\) is any consistant estimator of the finite long memory constant $\pi_{\alpha}$,(e.g. the Whittle estimator). For testing the point-wise null hypothesis
\beqns
H_o : \beta_l(t_o) = \beta_{l,o}(t_o) \quad \text{versus} \quad H_1 : \beta_l(t_o) \neq \beta_{l,o}(t_o),
\eeqns
at a fixed point $t_o$, the rejection region at significance level $\alpha \in (0,1)$ is defined as:
\beqns
\mathcal{R}_\alpha = \left\{ (y_i, X_{l,i}, t_i)_{i=1}^n \in \mathbb{R}^{3n} : \left| T_n(t_o) \right| > z_{\alpha/2} \right\}.
\eeqns
So, we reject $H_o$ at significance level $\alpha$ whenever
\beqns
|T_n(t_o)| > z_{\alpha/2} = \Phi^{-1}(1 - \alpha/2).
\eeqns
Consider the local alternatives of the form 
\beqns
H_1^{(n)} : \beta_l(t) = \beta_{l,o}(t) + \frac{\delta}{\sqrt{n^{\alpha}}}, \quad \text{where} \quad  \delta\ne 0.
\eeqns
Therefore,  under $H_1^{(n)}$, one has
\beqns
T_n(t_o) \xrightarrow{d} \mathcal{N}\left( \frac{\delta}{\sigma_l(t_o)}, 1 \right).
\eeqns
Consequently, the asymptotic power function is
\beqns
\beta_n(\delta) = P\left( |T_n(t_o)|> z_{\alpha/2}\Big|H_1^{(n)}\right) = 1 - \Phi\left( z_{\alpha/2} - \frac{\delta}{\sigma_l(t_o)} \right) + \Phi\left( -z_{\alpha/2} - \frac{\delta}{\sigma_l(t_o)} \right).
\eeqns
\section{Simulations Study}
 To assess the performance of estimators \fr{bet-l-est} and \fr{bet-vec-est}  in a finite sample setting and to compare it to existing methods, we carry out a limited simulation study with $r=2$. We evaluate the mean integrated square error (MISE) $\EE \| \hat{y}-y\|^2$ of the  estimator. The simulation is completed with R.

\begin{enumerate}

\item We first generate  $x_{1i}$ and $x_{2i}$ using two distinct normal density functions $N(200, 20)$ and $N(45, 5)$, respectively. The $t_i$ are generated using an exponential distribution with mean equal to 0.25.

\item We generate the data using equation \fr{conveq} with test functions $\beta_1(t_{i}) = t^{5/2}_i e^{-(t_i-3)}$, $\beta_2(t_{i})=\frac{t_i}{(t^2_{i} +1 )^4}$,  and $\beta_3(t_i)=\frac{1}{e^{t_i}+e^{2t_i}}$, $i = 1,2, . . . ,n$, with $n=400, 800, 1200$.

\item We simulate the error $\{\varepsilon_i\}$ in \fr{conveq} using standard normal, and we use $\sigma=0.0001$. 

\item To choose $M_1$ and $M_2$, we use leave-one-out cross validation.

\item We compute  the averages of the errors over $n_0=1000$ repeated simulations by
\be \label{misehat}
\widehat{\text{MISE}}=\frac{1}{n_0}\sum\limits_{i=1}^{n_0} \EE  \left\| \hat{y_i}-y_i \right\|^2.
\ee
\item The proposed Laguerre methodology (GL-VCM) is compared to the Local Linear Kernel method (LL-VCM) (Fan and Zhang~(1999)) and the Nadaraya-Watson method (NW-VCM) (Park et al.~(2015)). To choose the bandwidth of each, cross-validation will be employed. 
\end{enumerate}
\begin{table}[h]
\centering
\caption{Comparison of Basis Complexity ($M, h$) and MISE (0.0001 Noise Scaling)}
\label{tab:final_simulation_results}
\begin{tabular}{lcccccccccc}
\hline
\textbf{Model} & $n$ & \multicolumn{3}{c}{$\beta_1, \beta_2$} & \multicolumn{3}{c}{$\beta_2, \beta_3$} & \multicolumn{3}{c}{$\beta_1, \beta_3$} \\

& & $M_1/h$ & $M_2/h$ & MISE & $M_1/h$ & $M_2/h$ & MISE & $M_1/h$ & $M_2/h$ & MISE \\
\hline
{GL-VCM} 
& 400  & 4.275 & 4.531 & 2.0288 & 4.986 & 3.119 & 38.6853 & 4.914 & 4.684 & 0.0041 \\
 & 800  & 5.511 & 4.866 & 1.6164 & 5.648 & 3.295 & 32.5238 & 5.639 & 4.578 & 0.0029 \\
& 1200 & 5.589 & 5.086 & 1.4573 & 5.749 & 3.770 & 29.3013 & 5.892 & 4.680 & 0.0019 \\
\hline
{LL-VCM} 
& 400  & 0.223 & 0.223 & 6.3271 & 0.198 & 0.198 & 124.7308 & 0.188 & 0.188 & 0.6808 \\
& 800  & 0.203 & 0.203 & 5.0563 & 0.184 & 0.184 & 93.7865  & 0.217 & 0.217 & 0.9448 \\
& 1200 & 0.196 & 0.196 & 4.4109 & 0.176 & 0.176 & 80.6365  & 0.228 & 0.228 & 1.0500 \\
\hline
{NW-VCM} 
& 400  & 0.063 & 0.063 & 25.7784 & 0.063 & 0.063 & 432.8071 & 0.088 & 0.088 & 46.8313 \\
& 800  & 0.056 & 0.056 & 25.1055 & 0.056 & 0.056 & 419.3375 & 0.080 & 0.080 & 46.9144 \\
& 1200 & 0.053 & 0.053 & 24.8080 & 0.053 & 0.053 & 415.7049 & 0.075 & 0.075 & 46.9144 \\
\hline
\end{tabular}
\end{table}
Table 1 outlines the behavior of the MISE depending on the chosen test functions and M1, M2, and h. The M1, M2, and h that are listed represent averages of those choices over the 1000 simulation runs. As you can see for the GL-VCM, the MISE decreases as the sample size $n$ increases. In addition, the MISE is substantially lower for the proposed GL-VCM approach compared to the two local kernel methods, demonstrating the Laguerre method's strength in estimating the functional effects in VCM when these effects are square-integrable on $[0, \infty)$. Furthermore, the choices M1 and M2 increase as $n$ increases, allowing the estimates to have higher resolution. This further increases the appeal of the method. \\
Figure 1 shows the plots of pairs of the actual functional coefficients $\beta_l(t)$, the shaded curves, versus the estimated ones (the bold curves) based on the three methods. ${\bf f}1$, ${\bf f}2$ and ${\bf f}3$ represent, respectively, the test functions $\beta_1(t)$, $\beta_2(t)$, and $\beta_3(t)$.  As you can see, the proposed GL-VCM approach shows the most accuracy compared to the other two kernel-based methods. The $\beta_1(t)$ was chosen to have a lower regularity, but still GL-VCM method does a very good job in detecting the detailed features of the functional coefficients. The $\beta_2(t)$ has the slowest rate of decay compared to $\beta_1(t)$ and $\beta_3(t)$, yet GL-VCM managed to estimate it with great accuracy while the other two methods failed.  
 \begin{figure}[htbp]
\caption{Actual versus estimated $\beta_l$ based on the three different methods}
\rotatebox{-90}
{\includegraphics[width=5in]{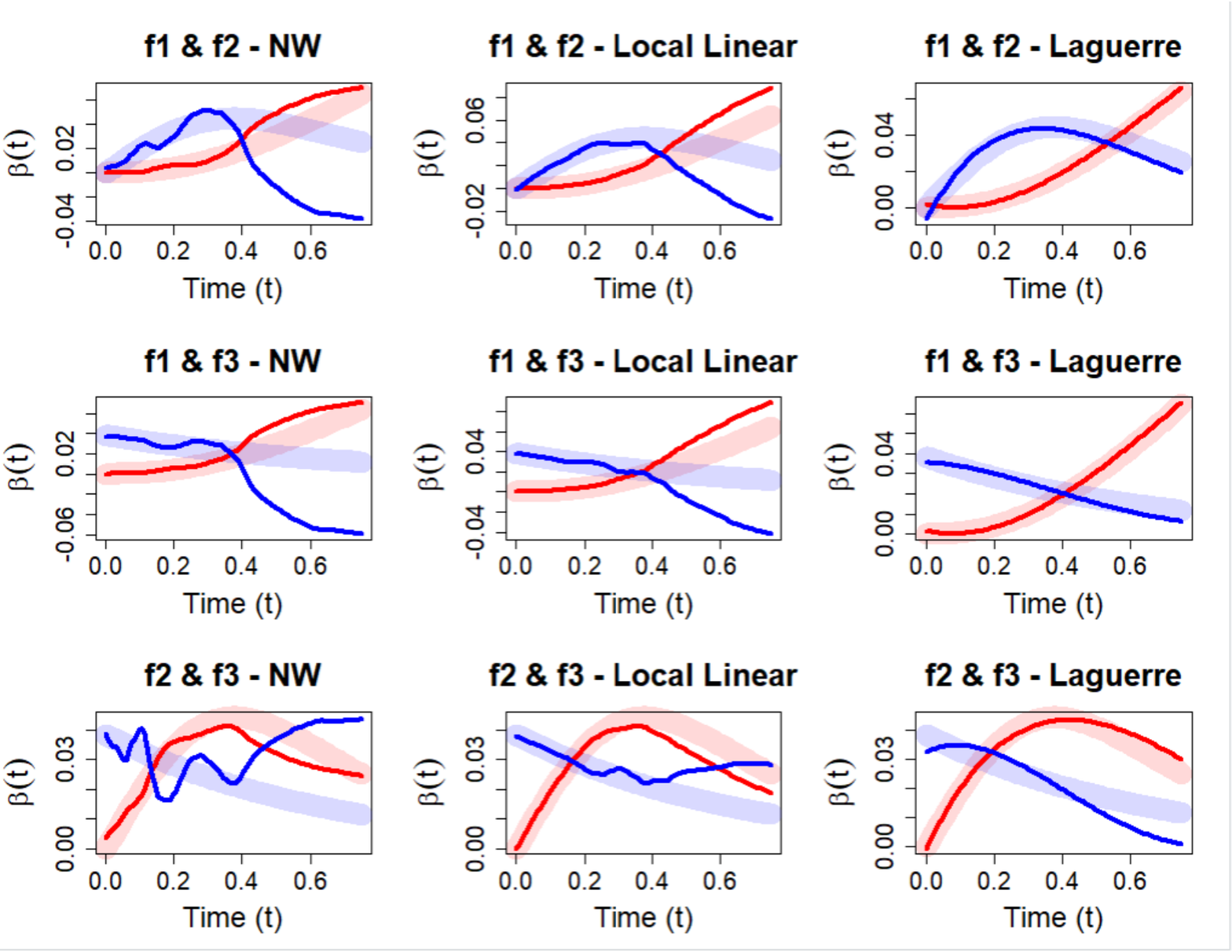}}
\end{figure}
 \section{Analysis of Real Data}
  To further the analysis of the model, a dataset called "SAheart" from the R package "ElemStatLearn" will be utilized. It makes use of sample data from 462 males in West Cape, South Africa, specifically a high-risk region for heart disease. The data originally had the purpose of analyzing 9 different variables effect on the presence or absence of coronary heart disease(Chd) in a subject. The data frame includes factors such as systolic blood pressure(sbp), low density lipoprotein cholesterol(ldl), adiposity, Type-A behavior score (typea), age, obesity, and alcohol. \\
 In this section, we will compare several estimation methods under the varying coefficient regression setup using the heart data as well as a linear regression without varying coefficients. The data will fit the varying coefficient model,
 \beqns
        obesity = \beta_0(age/100) + chd * \beta_1(age/100) + typea * \beta_2(age/100) + adiposity * \beta_3(age/100) + \varepsilon,
 \eeqns
        for the Generalized Laguerre, GL-VCM, Local-Linear, LL-VCM, and Smoothing Spline VCM (SS-VCM), while the data will fit the model
\beqns
        obesity = \beta_0 + chd * \beta_1 + typea * \beta_2 + adiposity * \beta_3 + \varepsilon,
\eeqns
        for the linear regression. A training and testing split with cross validation was used to find the optimal bandwidth $h$ to use for minimizing the error in the local linear kernel estimator for varying coefficient model. A grid search method was used to find the optimal truncation level $M$ in the generalized Laguerre method. The MSE was lowest at h = .133, for the local linear kernel method while it was the lowest at M1 = 3, M2 = 3, M3 = 5, and M4 = 3, for the generalized Laguerre method. Table 2 outlines the model performance.
\begin{table}[h]
\centering
\caption{Model Performance Comparison: Obesity Prediction}
\label{tab:model_comparison}
\begin{tabular}{lcccc} 
\hline
\textbf{Model} & \textbf{(M1-M4)/h} & \textbf{$R^2$} & \textbf{MSE} & \textbf{AIC} \\ \hline
Linear Regression    & N/A     & 0.5341 & 8.2549 & 983.19 \\ \hline
Generalized Laguerre & 3/3/5/3 & 0.5926 & 7.2171 & 941.12 \\ \hline
Local Linear         & 0.133   & 0.6005 & 7.0772 & 940.39 \\ \hline
Smoothing Spline     & N/A     & 0.5886  & 7.0788 & 932.73 \\ \hline
\end{tabular}
\end{table}
\\
\begin{figure}
    \centering
     \caption{Laguerre VCM Coefficient Estimates and confidence bands versus age}
    \includegraphics[width=0.5\linewidth]{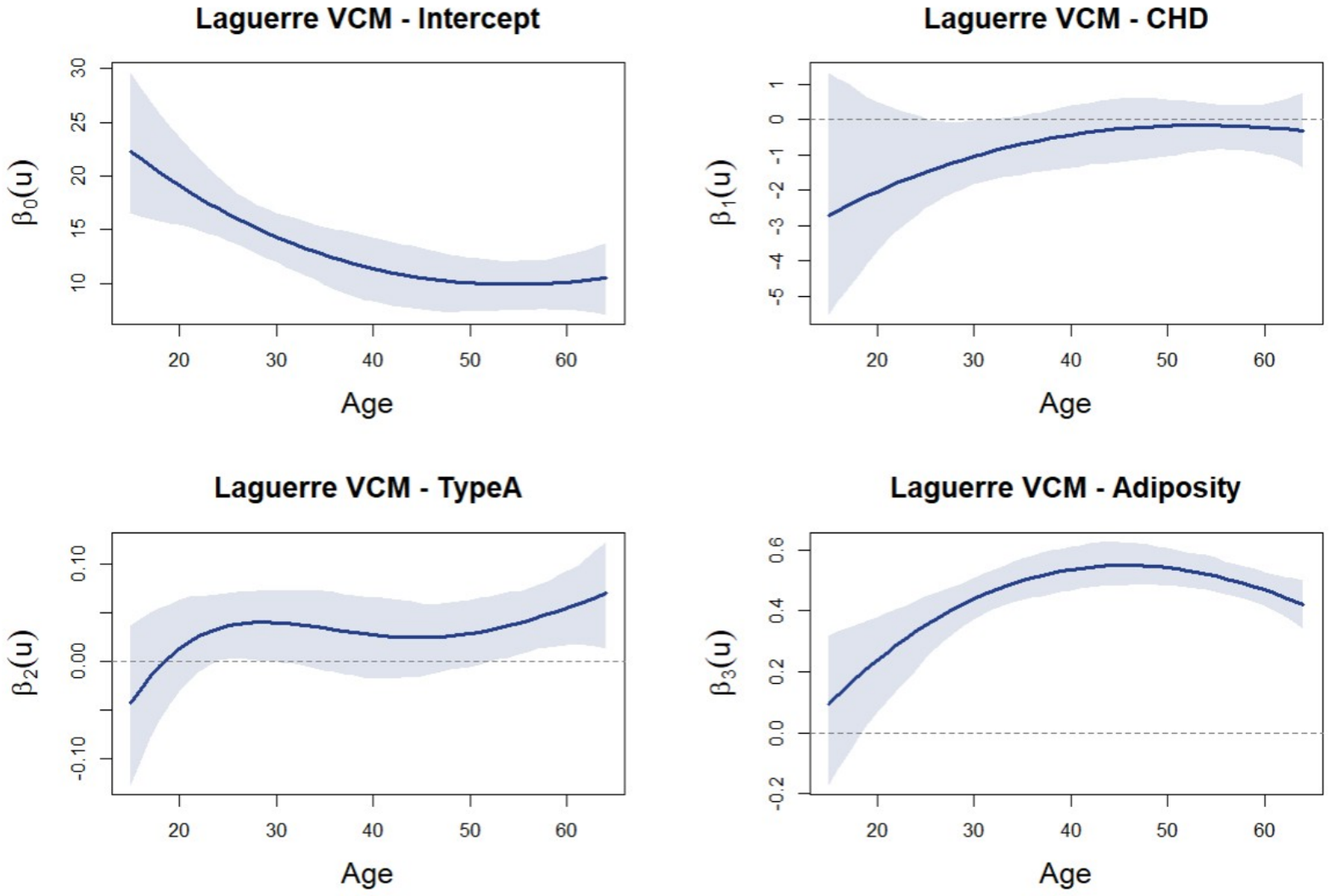}
    \label{fig:placeholder}
\end{figure}
\begin{figure}
    \centering
     \caption{Smoothing Spline VCM Coefficient Estimates Over Time}
    \includegraphics[width=0.5\linewidth]{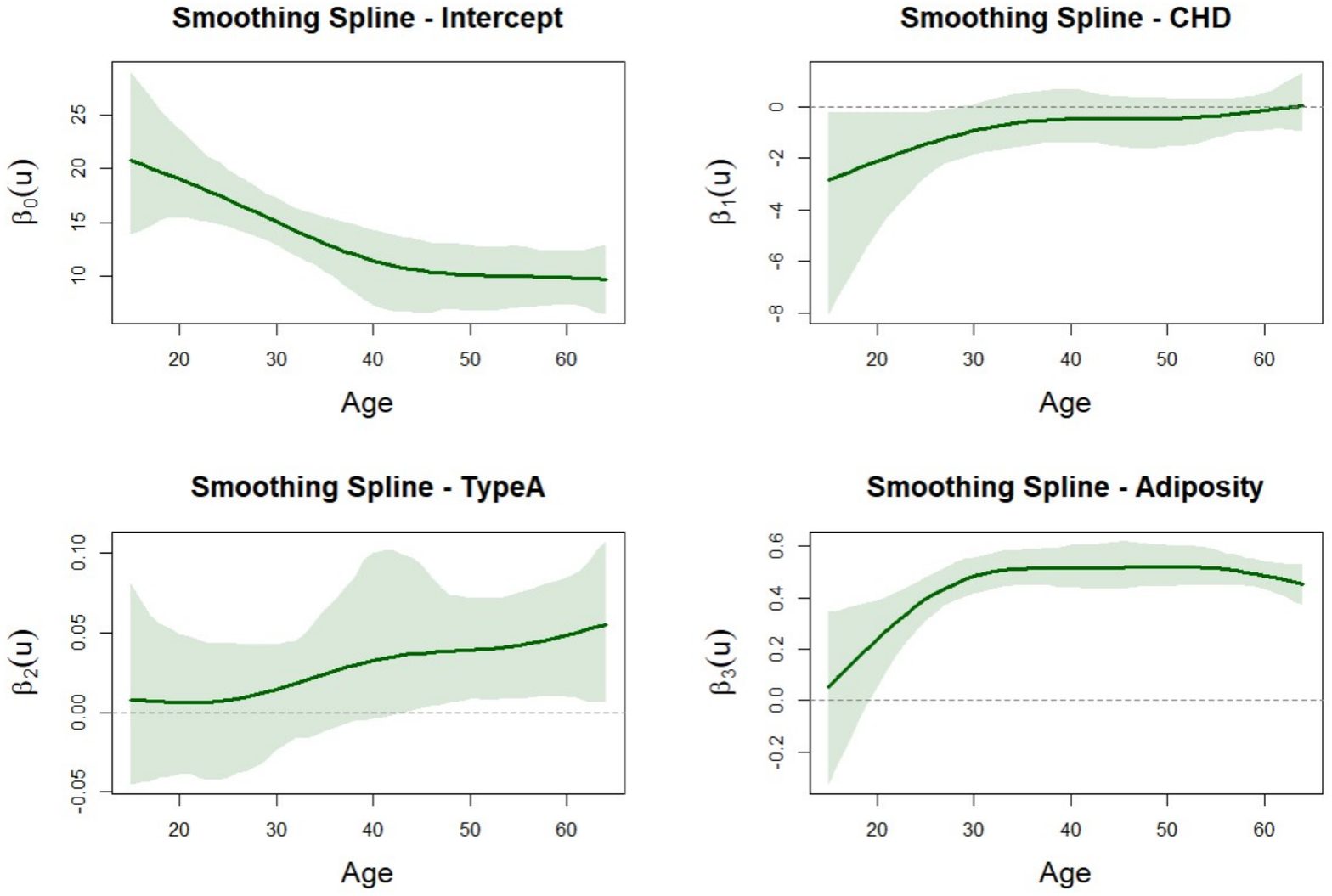}
 \label{SSbetacoefficients}
\end{figure}
Therefore, on the SAheart dataset, the Laguerre Method performs similarly to the local linear kernel method and the smoothing spline, but better than the baseline model, linear regression. \\
Using the bootstrap method, 500 different samples can be created to build a 95 percent confidence interval for the beta coefficients. Figures \ref{fig:placeholder}, \ref{SSbetacoefficients} and \ref{KernelBetaCoefficients} show the estimated functional beta coefficients based on the three methods as age changes with the shaded regions representing the confidence bands of the true functional coefficients. All three methods clearly capture the dynamic change in effects as age increases, nevertheless, Laguarre method shows relatively smoother curves and finer confidence bands. Notice that we can even observe which age range/s the factors has statistically significant effect on blood pressure. For instance, the effect of ChD is only statistically significant for ages ranging between around 25 and 32, and not significant for other age ranges.  Type A effect is statistically significant for ages between 22 and 30, and 52 and above. Whereas for Adiposity, the effect is statistically significant for all ages represented in the data set. 
\begin{figure}
    \centering
      \caption{Local Linear Kernel VCM Coefficient Estimates Over Time}
    \includegraphics[width=0.5\linewidth]{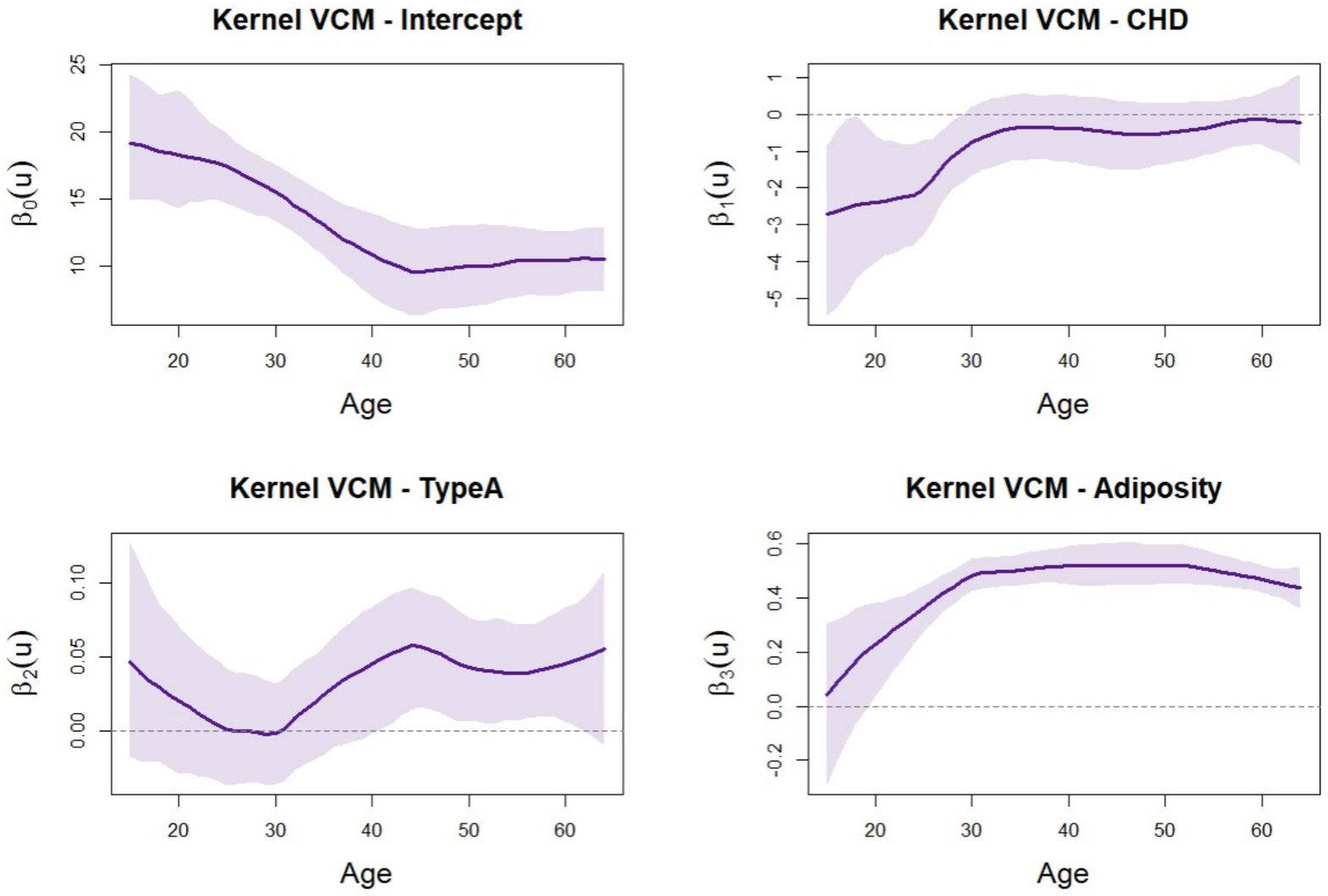}
   \label{KernelBetaCoefficients}
\end{figure}
\subsection{Residual Analysis}
\begin{figure}
    \centering
    \includegraphics[width=1\linewidth]{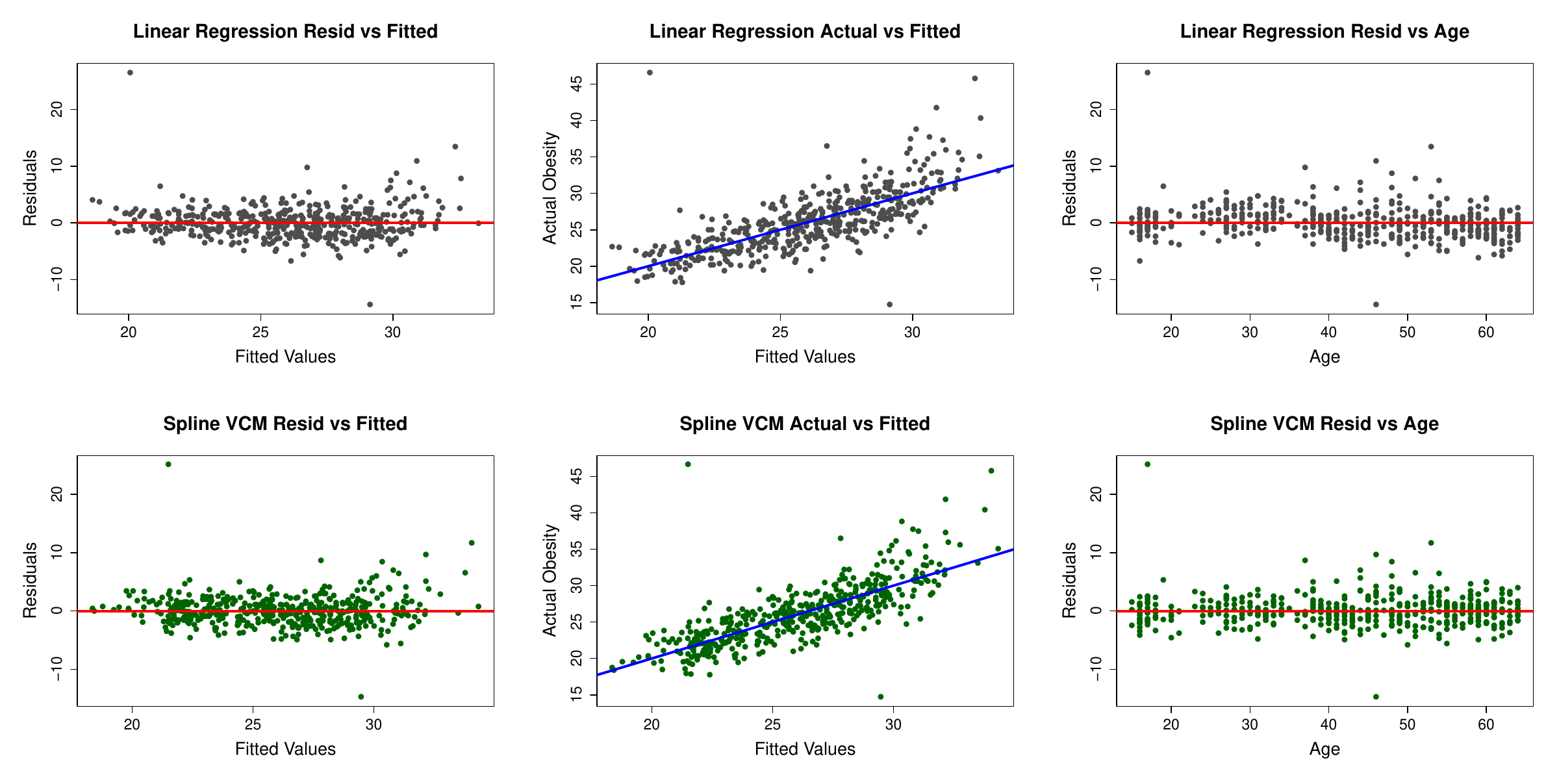}
    \caption{Linear Regression and Smoothing Spline VCM Residual Analysis}
    \label{SSandLMResiduals}
\end{figure}
\begin{figure}
    \centering
    \includegraphics[width=1\linewidth]{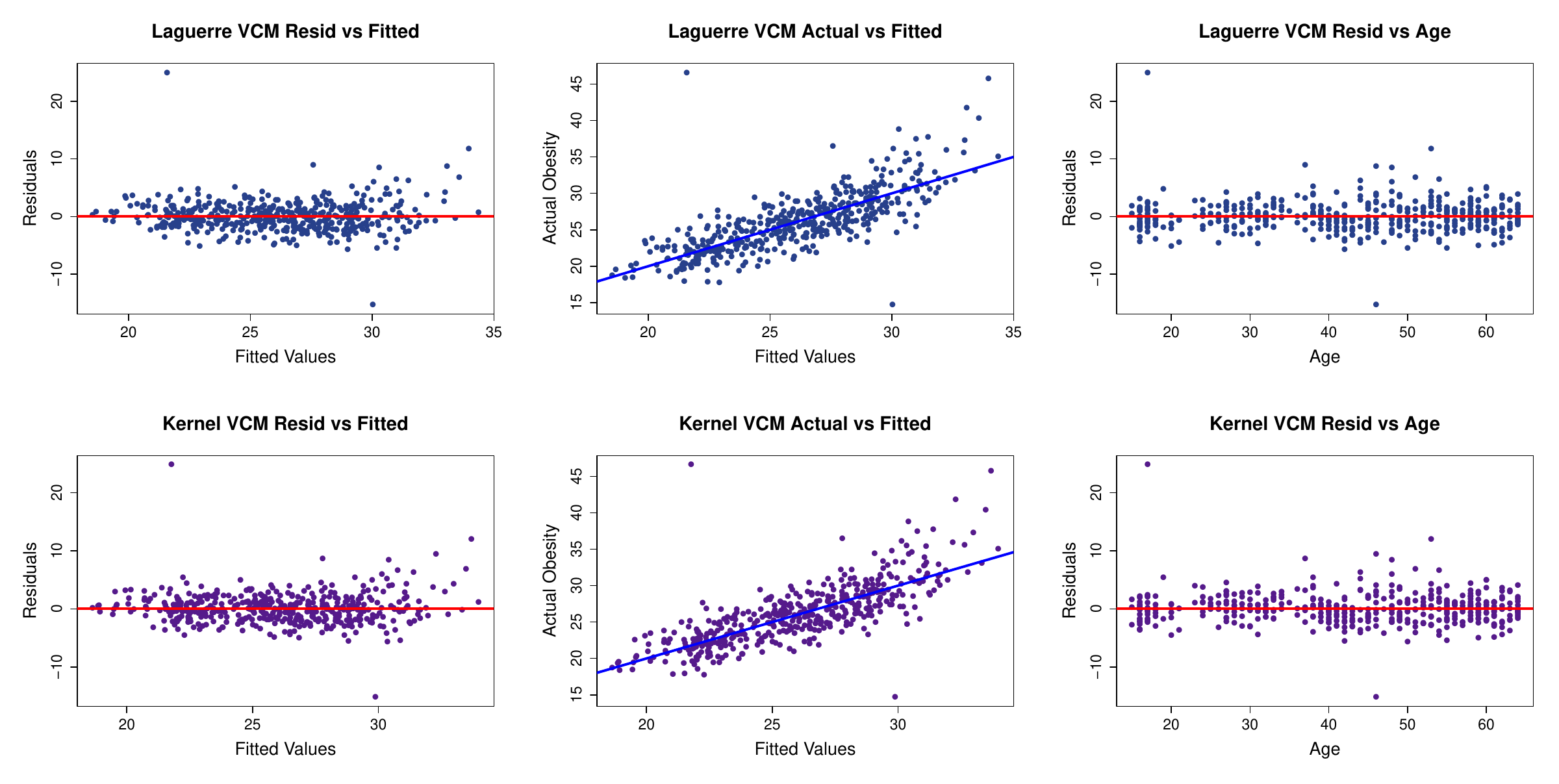}
    \caption{Laguerre and Local Kernel VCM Residual Analysis}
    \label{LaguerreandLKResidualanalysis}
\end{figure}
\begin{figure}
\caption{}
\includegraphics[width=6in]{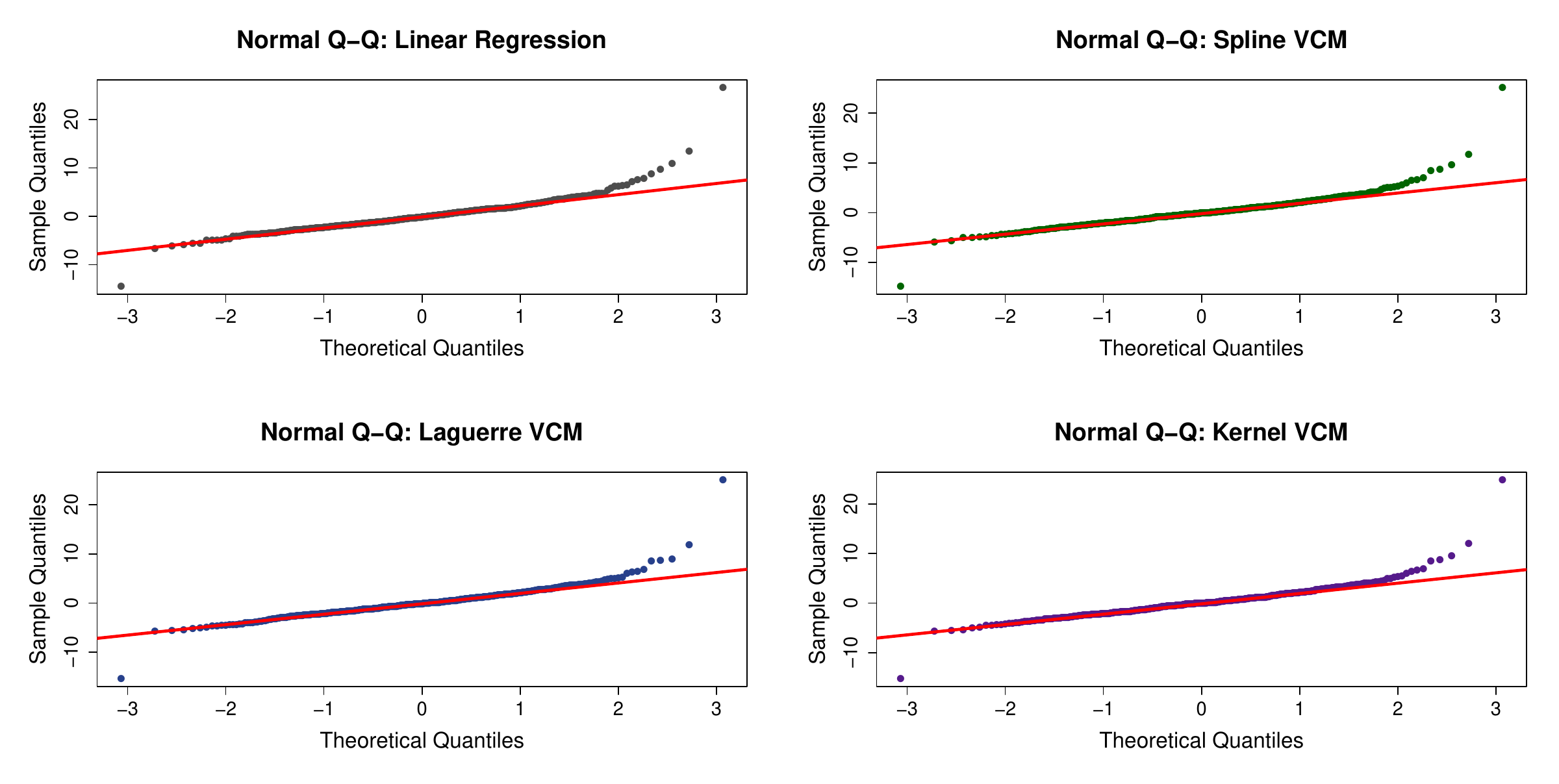}
  \label{qqplotsforall}
\end{figure}
Figure \ref{SSandLMResiduals} and \ref{LaguerreandLKResidualanalysis} visualize the errors and how they vary based on the variable, age. There is no evidence to suggest for the Generalized Laguerre, Local Linear Kernel, Smoothing Spline Methods or the linear regression, that errors aren't normally distributed. In addition, the Age versus Residuals plots for Local Linear estimator, the Laguerre estimator and the Smoothing Spline estimator show that VCM consideration is an adequate choice for this data set. Figure \ref{qqplotsforall} shows a Normal Probability plot that indicates it is approximately Gaussian. Only near the tails, there is heavy variance from the normal line. Therefore, the Normal Probability plots and behavior of the other residual plots indicate that the errors aren't dependent on age or dependent on $\hat{y}$ in any of the methods/models chosen to analyze.
\begin{figure}
\caption{}
\includegraphics[width=6in]{QQplot-Combined.pdf}
  \label{qqplotsforall}
\end{figure}
\section{Proofs}
 {\bf Proof of Theorem \ref{th:upperbds-indiv}.} Notice that, by \fr{Y-syst-phi}, one has
\beqn \label{est-diff}
\widehat{\beta}_l(t)-\beta_l(t)=0-\rho_l(t)+\left(\tilde{\boldsymbol{\phi}}_l(t)\right)^T\left[\Phi^T\Phi\right]^{-1}\Phi^T\boldsymbol{\rho}+\sigma\left(\tilde{\boldsymbol{\phi}}_l(t)\right)^T\left[\Phi^T\Phi\right]^{-1}\Phi^T\boldsymbol{\varepsilon}_n.
\eeqn
Therefore, mean square integrated error for \fr{est-diff} is
\beqn\label{err-decomp}
\EE\|\widehat{\beta}_l(t)-\beta_l(t)\|^2_2=O\left( \sum^{\infty}_{k=M_l}\theta_{lk}^2+\EE\|\left(\tilde{\boldsymbol{\phi}}_l(t)\right)^T\left[\Phi^T\Phi\right]^{-1}\Phi^T\boldsymbol{\rho}\|^2_2+\sigma^2\EE\|\left(\tilde{\boldsymbol{\phi}}_l(t)\right)^T\left[\Phi^T\Phi\right]^{-1}\Phi^T\boldsymbol{\varepsilon}_n\|^2_2\right).
\eeqn
\\
For the third term in \fr{err-decomp}
\beqn \label{var-iid}
\EE\|\left(\tilde{\boldsymbol{\phi}}_l(t)\right)^T\left[\Phi^T\Phi\right]^{-1}\Phi^T\boldsymbol{\varepsilon}_n\|^2_2&=&\EE\left(\left(\tilde{\boldsymbol{\phi}}_l(t)\right)^T\left[\Phi^T\Phi\right]^{-1}\Phi^T\boldsymbol{\varepsilon}_n\boldsymbol{\varepsilon}_n^T\Phi\left[\Phi^T\Phi\right]^{-1}\left(\tilde{\boldsymbol{\phi}}_l(t)\right)\right)\nonumber\\
&=&\EE\left(\left(\tilde{\boldsymbol{\phi}}_l(t)\right)^T\left[\Phi^T\Phi\right]^{-1}\Phi^T\EE[\boldsymbol{\varepsilon}_n\boldsymbol{\varepsilon}_n^T]\Phi\left[\Phi^T\Phi\right]^{-1}\left(\tilde{\boldsymbol{\phi}}_l(t)\right)\right)_{h, \mu}\nonumber\\
&=&\EE\left(\left(\tilde{\boldsymbol{\phi}}_l(t)\right)^T\left[\Phi^T\Phi\right]^{-1}\Phi^T\Sigma_n\Phi\left[\Phi^T\Phi\right]^{-1}\left(\tilde{\boldsymbol{\phi}}_l(t)\right)\right)_{h, \mu}\nonumber\\
&\leq&\lambda_{\max}(\Sigma_n)\EE\left[\left(\tilde{\boldsymbol{\phi}}_l(t)\right)^T\left[\Phi^T\Phi\right]^{-1}\left(\tilde{\boldsymbol{\phi}}_l(t)\right)\right]_{h, \mu}.
\eeqn
 Similarly,
\beqn\label{rhorohbnd}
\EE\|\left(\tilde{\boldsymbol{\phi}}_l(t)\right)^T\left[\Phi^T\Phi\right]^{-1}\Phi^T\boldsymbol{\rho}\|^2_2&=&\EE\left[tr\left(\left(\tilde{\boldsymbol{\phi}}_l(t)\right)^T\left[\Phi^T\Phi\right]^{-1}\Phi^T\boldsymbol{\rho}\boldsymbol{\rho}^T\Phi\left[\Phi^T\Phi\right]^{-1}\left(\tilde{\boldsymbol{\phi}}_l(t)\right)\right)\right]_{h, \mu}\nonumber\\
&\leq&\EE\left[\lambda_{\max}\left(\boldsymbol{\rho}\boldsymbol{\rho}^T\right)tr\left(\left[\Phi^T\Phi\right]^{-1}{\bf I}_l\right)\right].
\eeqn
Furthermore, since the density $h\geq m_o$, and since for $\phi_k(t)$ defined in \fr{lague-def} is such that $\phi_k(t)\leq 1$, one has 
\beqn 
\rho^2_l(t_i)\leq \frac{1}{m_o}\left(\sum^{\infty}_{k=M_l}|\theta_{lk}|\right)^2
\leq \frac{1}{m_o}\sum^{\infty}_{k=M_l}k^2\theta^2_{lk}\sum^{\infty}_{k=M_l}\frac{1}{k^2}
\leq \frac{A_l}{m_oM_l}. 
\eeqn
This result means that \fr{rhorohbnd} can be bounded by 
\be
\EE\|\left(\tilde{\boldsymbol{\phi}}_l(t)\right)^T\left[\Phi^T\Phi\right]^{-1}\Phi^T\boldsymbol{\rho}\|^2_2\leq \frac{A}{m_oM_l}tr\left(\EE\left[\left[\Phi^T\Phi\right]^{-1}\right]_{h, \mu}{\bf I}_l\right).
\ee
Hence, by condition \fr{lagsob}, \fr{err-decomp} reduces to  \fr{err-singl}.\\
  {\bf Proof of Theorem \ref{th:upperbds-vectr}.} Left-multiplying \fr{Y-syst-phi} by $\left[\Psi(t)\right]^T$ and subtracting $\boldsymbol{\beta}(t)$, yields
\beqn \label{est-vect-diff}
\widehat{\boldsymbol{\beta}}(t)-\boldsymbol{\beta}(t)=-\rho(t)+\left[\Psi(t)\right]^T\left[\Phi^T\Phi\right]^{-1}\Phi^T\boldsymbol{\rho}+\sigma\left[\Psi(t)\right]^T\left[\Phi^T\Phi\right]^{-1}\Phi^T\boldsymbol{\varepsilon}_n.
\eeqn
Therefore, the expectation of the squared norm of \fr{est-vect-diff} gives
\beqn
\EE\|\widehat{\boldsymbol{\beta}}(t)-\boldsymbol{\beta}(t)\|^2&\asymp& \sum^{\bf r}_{l=1}\int^{\infty}_0\rho^2_l(t)h(t)dt+\EE\left[\lambda_{\max}\left[\boldsymbol{\rho}\boldsymbol{\rho}^T\right]tr\left(\left[\Psi(t)\right]^T\left[\Phi^T\Phi\right]^{-1}\left[\Psi(t)\right]\right)\right]\nonumber\\
&+& \sigma^2\lambda_{\max}\left(\Sigma_n\right)\EE\left[tr\left(\left[\Psi(t)\right]^T\left[\Phi^T\Phi\right]^{-1}\left[\Psi(t)\right]\right)\right].
\eeqn
Hence, similar to \fr{err-singl}, it can be shown that the second term is of order lower than the third. \\
{\bf Proof of Theorem \ref{th:lowerbds}}.
In order to prove the lower bounds, we use the following lemma.  
\begin{lemma} (Lemma $A.1$ of Bunea et al.~(2007)) 
\label{lem:Bunea} 
Let $\Te$ be a set of functions of cardinality $\card(\Te)\geq 2$ such that\\
(i) $\|f-g\|_2^2 \geq 4\delta^2, \ for\  f, g \in \Te, \ f \neq g, $\\
(ii) the Kullback divergences $\mathcal{K}(P_f, P_g)$ between the measures $P_f$ and $P_g$ 
satisfy the inequality $\mathcal{K}(P_f, P_g) \leq \log(\card(\Te))/16,\ for\ f,\ g \in \Te$.\\
Then, for some absolute positive constant $C_5$, one has 
$$
 \inf_{f_n}\sup_{f\in \Te} \EE_f\|f_n-f\|_2^2 \geq C_5 \delta^2,
$$
where $\inf_{f_n}$ denotes the infimum over all estimators.
\end{lemma}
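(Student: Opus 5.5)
The plan is the standard reduction from estimation to multiple hypothesis testing, followed by a Fano-type bound on the testing error. Write $\Te=\{f_0,f_1,\ldots,f_N\}$ with $N+1=\card(\Te)\geq 2$. Fix an arbitrary estimator $f_n$ and define the minimum-distance test $\psi=\arg\min_{0\le j\le N}\|f_n-f_j\|_2$, breaking ties arbitrarily.

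\textbf{Step 1: reduction to testing.} Suppose $\psi\neq j$ while the truth is $f_j$. Then $\|f_n-f_\psi\|_2\le\|f_n-f_j\|_2$. Condition (i) and the triangle inequality give
\[
2\delta\le\|f_j-f_\psi\|_2\le \|f_n-f_j\|_2+\|f_n-f_\psi\|_2\le 2\|f_n-f_j\|_2 .
\]
Hence $\|f_n-f_j\|_2\ge\delta$ on the event $\{\psi\neq j\}$. By Markov's inequality,
\[
\sup_{f\in\Te}\EE_f\|f_n-f\|_2^2\ \ge\ \delta^2\max_{0\le j\le N}P_{f_j}(\psi\neq j)\ \ge\ \delta^2\inf_{\psi}\max_j P_{f_j}(\psi\neq j).
\]
It therefore suffices to show that the minimax probability of testing error is bounded below by an absolute constant $p_*>0$. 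Then $C_5=p_*$.

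\textbf{Step 2: lower bound on the testing error via Fano.} For $N$ large I would use Fano's inequality with the uniform prior on $\Te$. It gives
\[
\max_j P_{f_j}(\psi\ne j)\ \ge\ \frac1{N+1}\sum_j P_{f_j}(\psi\neq j)\ \ge\ 1-\frac{I+\log 2}{\log(N+1)} .
\]
Here the mutual information satisfies
\[
I\le \frac{1}{(N+1)^2}\sum_{j,k}\mathcal K(P_{f_j},P_{f_k})\le \frac{\log(N+1)}{16}
\]
by condition (ii), using convexity of the Kullback divergence. The bound becomes $\max_j P_{f_j}(\psi\ne j)\ge \frac{15}{16}-\frac{\log 2}{\log(N+1)}$, which exceeds, say, $1/4$ as soon as $N+1\ge 4$.

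\textbf{Step 3: small cardinalities (the main obstacle).} The delicate point is the range $N+1\in\{2,3\}$, where the Fano bound above is vacuous. In this case I would use the two-point argument. Pick any two elements $f_0,f_1$; by (ii), $\mathcal K(P_{f_1},P_{f_0})\le \log 3/16<1$. The Bretagnolle--Huber inequality, or Pinsker's inequality, gives
\[
\inf_\psi\max_{j=0,1}P_{f_j}(\psi\ne j)\ \ge\ \tfrac14 e^{-\mathcal K(P_{f_1},P_{f_0})}\ \ge\ \tfrac14 e^{-1}.
\]
Any test over the larger set induces a test between these two points with no larger error, so this bound carries over. Combining the two regimes yields $p_*=\min(1/4,\,e^{-1}/4)=e^{-1}/4$. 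Together with Step 1 this proves the claim with $C_5=e^{-1}/4$. Alternatively, one could simply invoke Theorem 2.5 of Tsybakov (2009), whose hypotheses are implied by (i)–(ii) after the index shift $N+1=\card(\Te)$.
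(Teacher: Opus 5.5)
Your proof is correct. The paper gives no argument for this lemma: it is quoted from Bunea et al.~(2007), where it follows from Tsybakov's Fano-type bound (Theorem 2.5). Your self-contained route is the standard proof behind that citation. You reduce to testing via the minimum-distance test, apply Fano's inequality with the convexity bound $I\le (\card(\Te))^{-2}\sum_{j,k}\mathcal{K}(P_{f_j},P_{f_k})$, and use Bretagnolle--Huber for small cardinalities. It also gives the explicit constant $C_5=e^{-1}/4$.

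Two minor remarks. First, Fano is already non-vacuous when $\card(\Te)=3$, since it gives $15/16-\log 2/\log 3\approx 0.31$; only $\card(\Te)=2$ genuinely needs the two-point bound. Second, your closing alternative via Tsybakov's Theorem 2.5 covers only $\card(\Te)\geq 3$. That theorem needs at least two alternatives, and it then applies with $\alpha=\log 3/(16\log 2)<1/8$. The case $\card(\Te)=2$ would still require the two-hypothesis bound (Tsybakov's Theorem 2.2), which your Step 3 already supplies.
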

Consider a set of  test vector functions $\boldsymbol{\beta}_{\omega}(t)=\left(\beta_{1, \omega}(t), \beta_{2, \omega}(t), \cdots, \beta_{{\bf r}, \omega}(t)\right)^T$ with components of the form 
\be\label{test-f}
\beta_{l, \omega}(t)=\lambda_{l}\sum^{M_l-1}_{k=0}\omega_{l, k}\tilde{\phi}_k(t),
\ee
where $\omega_{l, k}\in\left\{0, 1\right\}$, for $k=0, 1, 2, \cdots, M_l$, $l=1, 2, \cdots, {\bf r}$. Notice that in order for \fr{test-f} to satisfy \fr{lagsob} we set $\lambda_l=A_lM_l^{-(\gamma_l+1/2)}$. In addition, consider another binary sequence $\tilde{\omega}$ and its corresponding vector function  $\boldsymbol{\beta}_{\tilde{\omega}}(t)$. Keep in mind that the set $\boldsymbol{\Omega}$ has cardinality $Card(\boldsymbol{\Omega})=2^{\sum^{\bf r}_{l=1}M_l}$. Then, the total square distance in $L^2(0, \infty)$ is 
\beqn\label{dist-bet}
\|\boldsymbol{\beta}_{\omega}(t)-\boldsymbol{\beta}_{\tilde{\omega}}(t)\|^2_{h}=\sum^{\bf r}_{l=1}\gamma^2_l\sum^{M_l-1}_{k=0}|\omega_{l, k}-\tilde{\omega}_{l, k}|^2\geq \frac{\sum^{\bf r}_{l=1}A_l^2M_l^{-2\gamma_l}}{8}. 
\eeqn
Due to Varshamov-Gilbert Lemma (Tsybakov~(2008), page 104). Denote
\beqn\label{q-bet}
Q(\boldsymbol{\beta}_{\omega})(i)={\bf x}_i\boldsymbol{\beta}_{\omega}(t_i),\ \ i=1, 2, \cdots, n,
\eeqn
and let ${\bf Q}(\boldsymbol{\beta}_{\omega})$ be the vector with components \fr{q-bet}. Let ${\bf P}_{\boldsymbol{\omega}}$ be the probability law for the process \fr{conveq} under the hypothesis $\boldsymbol{\beta}_{\omega}$. Then, the Kullback divergence between ${\bf P}_{\boldsymbol{\omega}}$ and ${\bf P}_{\tilde{\boldsymbol{\omega}}}$ is
\beqns
\mathcal{K}\left({\bf P}_{\boldsymbol{\omega}}, {\bf P}_{\tilde{\boldsymbol{\omega}}}\right)&=& \frac{1}{2\sigma^2}\EE\left(\left[{\bf Q}(\boldsymbol{\beta}_{\omega})-{\bf Q}(\boldsymbol{\beta}_{\tilde{\omega}})\right]^T\Sigma_n^{-1}\left[{\bf Q}(\boldsymbol{\beta}_{\omega})-{\bf Q}(\boldsymbol{\beta}_{\tilde{\omega}})\right]\right)\\
&\leq& \frac{n}{2\sigma^2}\lambda_{\max}(\Sigma^{-1}_n)\EE\left([\boldsymbol{\beta}_{\omega}-\boldsymbol{\beta}_{\tilde{\omega}}]^T{\bf X}^T{\bf X}[\boldsymbol{\beta}_{\omega}-\boldsymbol{\beta}_{\tilde{\omega}}]\right)\\
&\leq& \frac{n^{\alpha}}{2c_1\sigma^2}\lambda_{\max}({\bf W})\|\boldsymbol{\beta}_{\omega}(t)-\boldsymbol{\beta}_{\tilde{\omega}}(t)\|^2_{h}\\
&\leq& \frac{n^{\alpha}}{2c_1\sigma^2}\lambda_{\max}({\bf W})\sum^{\bf r}_{l=1}A_l^2M_l^{-2\gamma_l}.
\eeqns
{\bf Lemma \ref{lem:Bunea} } implies that we set 
\beqn \label{lem-cond-ii}
\frac{n^{\alpha}}{2c_1\sigma^2}\lambda_{\max}({\bf W})\sum^{\bf r}_{l=1}A_l^2M_l^{-2\gamma_l}\leq \frac{\pi_o}{16} \sum^{\bf r}_{l=1}M_l.
\eeqn
Notice that based on \fr{lem-cond-ii}, one can choose $M_l$ as
\be \label{lem-bwd}
M_l\asymp \left[\frac{A_l^2n^{\alpha}}{c_1\sigma^2}\right]^{\frac{1}{2\gamma_l+1}}.
\ee
Hence,  plugging \fr{lem-bwd} in \fr{dist-bet} completes the proof. \\
{\bf Proof of Theorem \ref{thnor}}. The proof follows from the linearity of the estimator and the Gaussianity of the errors. 
$$\widehat{\beta}_l(t) - \beta_l(t) = - \rho_l(t) + \left( \tilde{\boldsymbol{\phi}}_l(t) \right)^T \left( \Phi^T \Phi \right)^{-1} \Phi^T \boldsymbol{\rho} + \left( \tilde{\boldsymbol{\phi}}_l(t) \right)^T \left( \Phi^T \Phi \right)^{-1} \Phi^T \boldsymbol{\varepsilon}_n.$$
Then we have 
\beqn\label{dechatbeta}
\sqrt{n^{\alpha}}\left(\widehat{\beta}_l(t) - \beta_l(t)  \right)&=& \sqrt{n^{\alpha}}\left(- \rho_l(t) + \left( \tilde{\boldsymbol{\phi}}_l(t) \right)^T \left( \Phi^T \Phi \right)^{-1} \Phi^T \boldsymbol{\rho} + \left( \tilde{\boldsymbol{\phi}}_l(t) \right)^T \left( \Phi^T \Phi \right)^{-1} \Phi^T\boldsymbol{\varepsilon}_n\right)\nonumber \\
&=:& \sqrt{n^{\alpha}}\left(- \rho_l(t) + W_l(t)\boldsymbol{\rho} + W_l(t) \boldsymbol{\varepsilon}_n\right)
\eeqn
Since $\boldsymbol{\varepsilon}_n = (\varepsilon_1, \varepsilon_2, \dots, \varepsilon_n)^T$ is a stationary Gaussian vector with mean zero and covariance matrix $\Sigma_n$,
where $\Sigma_{ij} = \mathrm{Cov}(\varepsilon_i, \varepsilon_j) $ for $\alpha \in (0, 1)$, $W_l(t)$ is also Gaussian and we have, $$\mathrm{E} \left[ W_l(t) \right] = \left( \tilde{\boldsymbol{\phi}}_l(t) \right)^T \left( \Phi^T \Phi \right)^{-1} \Phi^T \mathrm{E} [\boldsymbol{\varepsilon}_n] = 0.$$
The variance of $W_l(t)\boldsymbol{\varepsilon}_n$ is:
\beqn\mathrm{Var} \left( W_l(t) \boldsymbol{\varepsilon}_n\right) &=& \mathrm{E} \left[ (W_l(t)\boldsymbol{\varepsilon}_n) ^2 \right]\nonumber \\
& =& \mathrm{E} \left[ \left( \tilde{\boldsymbol{\phi}}_l(t) \right)^T \left( \Phi^T \Phi \right)^{-1} \Phi^T \boldsymbol{\varepsilon}_n \boldsymbol{\varepsilon}_n^T \Phi \left( \Phi^T \Phi \right)^{-1} \left( \tilde{\boldsymbol{\phi}}_l(t) \right) \right]. \nonumber
\eeqn
Using the linearity of expectation and the fact that $\EE[\boldsymbol{\varepsilon}_n \boldsymbol{\varepsilon}_n^T]=\Sigma_n$:
\be\label{varWeps}\mathrm{Var} \left( W_l(t) \boldsymbol{\varepsilon}_n\right) = \left( \tilde{\boldsymbol{\phi}}_l(t) \right)^T \left( \Phi^T \Phi \right)^{-1} \Phi^T \Sigma_n \Phi \left( \Phi^T \Phi \right)^{-1} \left( \tilde{\boldsymbol{\phi}}_l(t) \right).
\ee
Recall that the $\{\varepsilon_i\}$ suffers from long-memory and $
\Var\left(\sum^N_{i=1}\varepsilon_i\right)\sim \pi_{\alpha}n^{2-\alpha},
$
where $\pi_{\alpha}$ is a finite and positive constant. By using theorem 2.1 of Taqqu~(1975) we obtain 
\beqn\label{Taqlim}
\lim_{n \to \infty} \frac{1}{n^{2-\alpha}} \Phi^T \Sigma_n \Phi \xrightarrow{p} \pi_\alpha \Gamma.\eeqn
In addition, for all $0\le j \le M_l$, and $0\le k \le M_l$, notice that
\beqn\left(\frac{\Phi^T \Phi}{n}\right)_{jk} &=&  \frac{1}{n}\sum_{i=1}^n \Phi_{ik} \Phi_{ij}\nonumber \\
&=&\frac{1}{n} \sum_{i=1}^n X_{li} X_{lj} \tilde{\phi}_k(t_i) \tilde{\phi}_j(t_i).\nonumber
\eeqn
Due to the law of large numbers, guaranteed by the fact that the design points $(t_i, x_i)$ are $i.i.d.$ with densities $h(t)$ and $\mu(x)$, one has
\beqn\label{Gammajk}
\lim_{n \to \infty}\left(\frac{\Phi^T \Phi}{n}\right)_{jk} &=& \mathbb{E} \left[ X_{l1} X_{l1} \tilde{\phi}_k(t_1) \tilde{\phi}_j(t_1) \right]=:\Gamma_{jk}
\eeqn
Now, combining (\ref{varWeps}), (\ref{Taqlim}) and (\ref{Gammajk}),  yields 
\be\label{varlim} 
n^{\alpha} \cdot \mathrm{Var} \left( W_l(t) \boldsymbol{\varepsilon}_n\mid t\right)  \xrightarrow{p} \pi_{\alpha} \tilde{\boldsymbol{\phi}}_l(t) ^T \Gamma^{-1}   \tilde{\boldsymbol{\phi}}_l(t).
\ee
Therefore, conditional on $t$, one has
\be\label{limW_eps}
n^{\alpha/2} \cdot W_l(t) \boldsymbol{\varepsilon}_n \xrightarrow{d} \mathcal{N}\left(0,\pi_{\alpha} \tilde{\boldsymbol{\phi}}_l(t) ^T \Gamma^{-1}   \tilde{\boldsymbol{\phi}}_l(t)\right)
\ee
The remainder terms (involving $\rho_l$) are shown to be $o_p(1)$ under the Laguerre-Sobolev assumption $\|\rho_l\|_h^2 \leq A^2 M_l^{-2\gamma_l}$, with $M_l \asymp n^{1/(2\gamma_l + 1)}$. We have 
\beqn\label{rho}
n^{\alpha/2} \cdot\|\rho_l\|_h &\le& An^{\alpha/2}M_l^{-\gamma_l}
\eeqn
leading to
\be\label{srho}
n^{\alpha/2} \rho_l = O(n^{\alpha/2-\gamma_l/(2\gamma_l + 1)})
\ee
which vanishes as $n \to \infty$ since $\alpha/2<\gamma_l/(2\gamma_l + 1)$. Using the bounds derived in equation (22), and since $\lambda_{\max}(\boldsymbol{\rho}\boldsymbol{\rho}^T) \leq \|\boldsymbol{\rho}\|_h^2 \leq {\bf r}^2\sum^{\bf r}_{l=1}A_l^2 M_l^{-2\gamma_l}$, with $\gamma_o=\min\left\{\gamma_1, \gamma_2, \cdots, \gamma_{\bf r}\right\}$, it follows that, 
 \be\label{W_lrho}\sqrt{n^{\alpha}} \cdot \left( \tilde{\boldsymbol{\phi}}_l(t) \right)^T \left( \Phi^T \Phi \right)^{-1} \Phi^T \boldsymbol{\rho} = O \left( n^{\alpha/2 - \gamma_o / (2\gamma_o + 1)} \right) = o_p(1).
 \ee
 Finally, combining (\ref{dechatbeta}), (\ref{limW_eps}), (\ref{srho}), (\ref{W_lrho}) and  Slutsky’s theorem completes the proof.
 

\begin{thebibliography}{99}
\bibitem{ben-Conn} Benhaddou, R., Connell and M. L. (2022),'Nonparametric empirical Bayes estimation based on generalized Laguerre series', {\it Communications in Statistics-Theory and Methods}, {\bf 52(19)}, 6896-6915.

\bibitem{ben-pensk-raj} Benhaddou, R., Pensky, M., and Rajapaksha, R. (2017),'Anisotropic functional Laplace deconvolution', {\it Journal of Statistical Planning and Inference}, {\bf 199}, 271-285.

\bibitem{bun-tsy-Weg} Bunea, F., A. Tsybakov, and M. H. Wegkamp. (2007),'Aggregation for Gaussian regression', {\it The Annals of Statistics}, {\bf 35(4)}, 1674-1697.

\bibitem{cai-fan-li} Cai, Z., Fan, J., and Li, R. (1999),'Efficient estimation and inferences for varying-coefficient models', {\it Journal of the American Statistical Association}, {\bf 95}, 888-902.

\bibitem{Chia-Ri-Wu} Chiang, C. T., Rice, J. A., and Wu, C. O. (2001),'Smoothing spline estimation for varying coefficient models with repeatedly measured dependent variables', {\it Journal of the American Statistical Association}, {\bf 96}, 605-619.

\bibitem{comt-gen} Comte, F., Genon-Catalot, V. (2015), 'Adaptive Laguerre density estimation for mixed Poisson models', {\it The Electronic Journal of  Statistics}, {\bf 9(1)}, 1113-1149.

\bibitem{comt-c-p-r} Comte, F., Cuenod, C. A., Pensky, M., Rozenholc Y. (2017), 'Laplace deconvolution on the basis of Time Domain Data and its Application to Dynamic Contrast-Enhanced Imaging', {\it The Journal of the Royal Statistical Society Series B}, {\bf 79(1)}, 69-94.

\bibitem{dusp} Dussap, F. (2021),'Anisotropic multivariate deconvolution using projection on the Laguerre basis', {\it Journal of Statistical Planning and Inference}, {\bf 215(3)}, 23-46.

\bibitem{fan-zhan} Fan, J., Yao Q., and Cai, Z. (2003),'Adaptive varying-coefficient linear models', {\it The Journal of the Royal Statistical Society B}, {\bf 65}, 57-80.

\bibitem{fan-zhan1} Fan, J., and Zhang, W. (1999),'Statistical estimation in varying coefficient models', {\it The Annals of Statistics}, {\bf 27(5)}, 1491-1518.

\bibitem{fan-zhan2} Fan, J., and Zhang, W. (2008),'Statistical Methods with varying coefficient models', {\it Statistical Interface}, {\bf 1(1)}, 179-195.

\bibitem{grtn-rk} Gradshtein, I. S., and Ryzhik, I. M. (1980), 'Tables of integrals, series, and products', {\it New York: Academic Press}.

\bibitem{Hastie-Tibshi} Hastie, T. J., and Tibshirani, R. J. (1993),'Varying-coefficient models', {\it Journal of the Royal Statistical Society: B}, {\bf 55}, 757-796.

\bibitem{hoov-Ri-W-y} Hoover, D.R., Rice, J. A., Wu, C.O., and Yang, L. P. (1998),'Nonparametric smoothing estimates of time-varying coefficient models with longitudinal data', {\it Biometrika}, {\bf 85}, 809-822.

\bibitem{huan-wu-zho} Huang, J.Z., Wu, C.O., and Zhou, L.(2004),'Polynomial spline estimation and inference for varying coefficient models with longitudinal data', {\it Statistica Sinica}, {\bf 14}, 763-788.

\bibitem{lepsmamsp} Laverny, O., Masiello, E.,  Maume-Deschamps, V., and Rulliere, D.(2021), 'Estimation of multivariate generalized gamma deconvolution through Laguerre expansions', {\it The Electronic Journal of  Statistics}, {\bf 15(2)}, 5158-5202.

\bibitem{pensk-klopp1} Park, B. U., Mammen, E., Lee, Y. K., and Lee, E. R. (2015),'Varying coefficient model: A Review and New Development', {\it International Statistical Review}, {\bf 83(1)}, 36-64.

\bibitem{pensk-klopp1} Pensky, M., and Klopp, O. (2013),'Non-asymptotic approach to varying coefficient Regression Models', {\it The Electronic Journal of Statistics}, {\bf 7}, 454-479.

\bibitem{pensk-klopp2} Pensky, M., and Klopp, O. (2015),'Sparse high-dimensional varying coefficient mode: Non-asymptotic minimax study', {\it The Annals of Statistics}, {\bf 43(3)}, 1273-1299.

\bibitem{Taq} Taqqu, M. (1975), 'Weak convergence to fractional Brownian motion and to Rosenblatt process', {\it Zeitschrift fur Wahrscheinlichkeitstheorie und Verwandte Gebiete}, {\bf 31}, 287-302.

\bibitem{Tsybv} Tsybakov, A. B. (2008), 'Introduction to nonparametric estimation', {\it New York: Springer}.

\bibitem{varesc} Vareschi, T. (2015), 'Noisy Laplace deconvolution with error in the operator', {\it Journal of Statistical Planning and Inference}, {\bf 157-158}, 16-35.

\bibitem{Wu-Chia} Wu, C.O., Chiang, C. T., and Hoover, C. T. (1998),'Asymptotic confidence regions for kernel smoothing of a varying-coefficient model with longitudinal data', {\it Journal of the American Statistical Association}, {\bf 93}, 1388-1402.

\bibitem{Zhou-You} Zhou, X., and You, J. (2004),'Wavelet estimation in varying-coefficient partially linear regression models', {\it Statistics and Probability Letters}, {\bf 68}, 91-104.
\end{thebibliography}
\end{document}